\documentclass[10pt,a4paper]{article}
\usepackage{amsfonts}
\usepackage{amssymb,amsthm, amsmath,graphicx}
\usepackage{bbm}
\usepackage{graphicx}

\usepackage{xcolor}

\usepackage{url}
\usepackage{multirow}

\vfuzz10pt \hfuzz10pt

\newtheorem{theorem}{Theorem}

\newtheorem{proposition}[theorem]{Proposition}
\newtheorem{corollary}[theorem]{Corollary}
\newtheorem{lemma}[theorem]{Lemma}

\theoremstyle{remark}

\def\CaP{\mathbf{P}}

\def\BfH{\mathcal{H}}

\def\FraC{\mathcal{C}}
\def\FraF{\mathcal{F}}

\def\FraS{\mathcal{S}}
\def\FraP{\mathcal{P}}
\def\FraT{\mathcal{T}}
\def\k{\mathbbmss{k}}
\def\N{\mathbb{N}}
\def\R{\mathbb{R}}

\def\Q{\mathbb{Q}}
\def\d{\mathrm{d}}

\def\int{\mathrm{int} }
\def\ap{\mathrm{Ap} }

\title{Computing  families of Cohen-Macaulay and Gorenstein rings}

\author{J. I. Garc\'{\i}a-Garc\'{\i}a\footnote{Departamento de Matem\'aticas, Universidad de C\'adiz,
E-11510 Puerto Real (C\'{a}diz, Spain). E-mail: ignacio.garcia@uca.es. Partially supported by MTM2010-15535 and Junta de Andaluc\'{\i}a group FQM-366. }\\
A. Vigneron-Tenorio\footnote{Departamento de Matem\'aticas, Universidad de C\'adiz,
E-11405 Jerez de la Frontera (C\'{a}diz, Spain). E-mail: alberto.vigneron@uca.es. Partially supported by MTM2007-64704, MTM2012-36917-C03-01
 and Junta de Andaluc\'{\i}a group FQM-366.}\\
}

\sloppy
\date{}

\begin{document}

\maketitle

\begin{abstract}
We characterize
Cohen-Macaulay and Gorenstein rings obtained from certain types of  convex body semigroups.
Algorithmic methods to check if a polygonal or circle semigroup is Cohen-Macaulay/Gorenstein are given. We also provide some families of Cohen-Macaulay and Gorenstein rings.

\smallskip
{\small \emph{Keywords:} Affine non normal semigroup, circle semigroup, Cohen-Macaulay ring, Cohen-Macaulay semigroup, convex body semigroup, Gorenstein ring, Gorenstein semigroup, polygonal semigroup.}

\smallskip
{\small \emph{MSC-class:} 20M14 (Primary),  20M05 (Secondary).}
\end{abstract}

\section*{Introduction}

Let $(S,+)$ be a finitely generated commutative monoid. If $\k$ is a field, we denote by $\k[S]$ the semigroup ring of $S$ over $\k$.
Note that $\k[S]$ is equal to $\bigoplus_{m \in S} \mathbbmss{k} \chi^m$ endowed with a multiplication which is $\mathbbmss{k}$-linear and such that $\chi^m \cdot \chi^n = \chi^{m+n}$, $m$ and $n \in S$ (see \cite{referencia_de_Pilar}).

Let $R$ be a Noetherian local ring, a finite $R$-module
$M\neq0$  is a Cohen-Macaulay module if ${\rm depth}(M)=\dim(M)$. If $R$ itself
is a Cohen-Macaulay module, then it is called a Cohen-Macaulay ring (see \cite{libro-C-M}).
In commutative algebra, a Gorenstein local ring is a Noetherian commutative local ring $R$ with finite injective dimension, as an $R$-module. There are many equivalent conditions most dealing with some sort of duality condition,
Bass in \cite{ubicuidad} publicized the concept of Gorenstein rings,
this concept is a special case of the more general concept of Cohen-Macaulay ring.
Both properties have been widely studied in ring theory, but if we try to search Cohen-Macaulay  and Gorenstein rings, few methods to obtain  such rings are found  (see \cite{huneke} and references therein).
One of the  contributions of this work is that it gives a method for constructing Cohen-Macaulay rings and it provides examples of Gorenstein rings.

To achieve our goal convex body semigroups are used (see \cite{ACBS}).
Let $F\subset \R^k_\ge$ be a convex body and $F_i=\{iX| X\in F\}$ with $i\in \N$, the convex body semigroup generated by $F$ is the semigroup $\FraF=\bigcup_{i=0}^{\infty} F_i\cap \N^k$;
 we study the cases when the semigroups are generated by convex polygons and circles.
According to a theorem of Hochster normal semigroup rings are Cohen-Macaulay (see \cite{libro-C-M} Theorem 6.3.5), but in our case all circle and polygonal semigroups are not normal
 which makes our study novel.
In this work  Cohen-Macaulayness and Gorensteiness  of the semigroup rings given by circles and polygons are characterized,  obtaining families fulfilling both properties.

The contents of this work are organized as follows.
Section \ref{sec1} contains some known results of Cohen-Macaulay and Gorenstein rings in terms of their associated semigroups.
In Section \ref{sec2} we give some results concerning the Cohen-Macaulay and Gorenstein properties for semigroups defined from convex bodies of $ \R^2 $.
In Sections \ref{sec3}, \ref{sec4} and \ref{sec5} the Cohen-Macaulayness and Gorensteiness  in affine circle semigroups and affine convex polygonal semigroups are characterized, giving in addition
 a  method for constructing Cohen-Macaulay semigroups and
examples of
Gorenstein semigroups.
Finally in Section \ref{sec6} we
discuss how effective is the use of  circle and polygonal semigroups in our problem and
present some conclusions about the use of them.

\section{Terminology and previous results}\label{sec1}

Let $G$ be a nonempty subset of $\R^k_\ge$ and denote by $L_{\Q_\geq}(G)$ the rational cone $\left\{\sum_{i=1}^p q_if_i| p\in\N, q_i\in \Q_{\geq}, f_i\in G \right\}$. Let  $Fr(L_{\Q_\geq}(G))$ be the boundary of $L_{\Q_\geq}(G)$ considering the usual topology of $\R^k$ and define the interior of $G$ as $G\setminus Fr(L_{\Q_\geq}(G))$, we denote it by $\int(G)$. Along this work it is used $\d(P)$ as the Euclidean distance between a point $P$ and the origin $O$.

Let $S\subset \N^k$ be the affine semigroup generated by $\{n_1,\ldots ,n_r,n_{r+1},\ldots ,n_{r+m}\}$.
A semigroup $S$ is called simplicial when $r=\dim (S)$ and $L_{\Q_\geq}(S)=L_{\Q_\geq}(\{n_1,\ldots ,n_r\})$. All semigroups appearing in this work are simplicial, so in the sequel we will assume such property.
For every $n\in S$ denote by $\ap(n)$ the set $\{s\in S| s-n\notin S\}$; this set is known as the Ap\'{e}ry set of $n$.
It is straightforward to prove that  $L_{\Q_\geq}(S)\cap \N^k$ is an affine semigroup, denote it by  $\FraC$.

The ring $\k[S]$ is isomorphic to the subring of $\k[t_1,\ldots ,t_{r+m}]$ generated by $t^s=t_1^{s_1}\cdots t_{r+m}^{s_{r+m}}$ with $s=(s_1,\ldots ,s_{m+r})\in S$ (see \cite{TrungHoa}).
A semigroup is called Cohen-Macaulay (C-M) if its associated semigroup ring $\k[S]$ is Cohen-Macaulay. In the same way $S$ is said to be Gorenstein if $\k[S]$ is also Gorenstein.

The following results show some characterizations of Cohen-Macaulay  rings  in terms of its associated semigroup.

\begin{theorem}\cite[Theorem 1.1]{RosalesCM}\label{C-M_Rosales}
The following conditions are equivalent:
\begin{enumerate}
\item $S$  is C-M.
\item For any $a,b\in S$ with $a+n_i=b+n_j$ ($1\le i\neq j\le r$), $a-n_j=b-n_i\in S.$
\end{enumerate}
\end{theorem}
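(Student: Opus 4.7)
Since $S$ is simplicial with extremal generators $n_{1},\dots,n_{r}$, the monomials $t^{n_{1}},\dots,t^{n_{r}}$ form a homogeneous system of parameters of $\k[S]$ (they span $L_{\Q_\geq}(S)$ and $\dim\k[S]=r$). In this graded setting, Cohen--Macaulayness is equivalent to this s.o.p.\ being a regular sequence, and equivalently to $\k[S]$ being free as a module over $R:=\k[t^{n_{1}},\dots,t^{n_{r}}]$ with basis $\{t^{s_{0}}:s_{0}\in\mathrm{Ap}(S)\}$, where $\mathrm{Ap}(S):=\{t\in S:t-n_{i}\notin S,\ \forall i\}$. Both implications will use this dictionary.

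\textbf{$(1)\Rightarrow(2)$.} Rewrite $a+n_{i}=b+n_{j}$ as the identity $t^{n_{i}}\cdot t^{a}=t^{n_{j}}\cdot t^{b}$, i.e.\ $t^{n_{i}}t^{a}\in(t^{n_{j}})$. Any permutation of a homogeneous s.o.p.\ in a Cohen--Macaulay graded ring is a regular sequence, so $t^{n_{i}}$ is a non-zero-divisor modulo $(t^{n_{j}})$, and hence $t^{a}\in(t^{n_{j}})$. The $S$-grading of $\k[S]$ then yields $a=n_{j}+c$ for some $c\in S$, so $a-n_{j}\in S$; the equality $a-n_{j}=b-n_{i}$ already holds in $\Z^{k}$.

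\textbf{$(2)\Rightarrow(1)$.} I will prove that every $c\in S$ admits a \emph{unique} decomposition $c=s_{0}+\sum_{k}\lambda_{k}n_{k}$ with $s_{0}\in\mathrm{Ap}(S)$ and $\lambda_{k}\in\N$; since the cone is simplicial the $n_{k}$ generate a free $\N$-submonoid, so this uniqueness is equivalent to $\k[S]$ being $R$-free. Existence is a greedy reduction (it terminates because the $n_{k}$ have strictly positive entries). For uniqueness I induct on the coordinate sum $|c|$. If $c\in\mathrm{Ap}(S)$, any $\lambda_{i}>0$ would give $c-n_{i}\in S$, contradicting $c\in\mathrm{Ap}(S)$; so the decomposition is trivial. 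Otherwise, suppose two decompositions of $c$ are given, with coefficient vectors $\lambda,\lambda'$. If some index $k$ has $\lambda_{k}>0$ and $\lambda'_{k}>0$, cancel $n_{k}$ from both sides and apply the inductive hypothesis to $c-n_{k}$. If instead $\lambda$ and $\lambda'$ have disjoint supports, pick $i$ with $\lambda_{i}>0$ and $j$ with $\lambda'_{j}>0$ (necessarily $i\neq j$). Then $c-n_{i},c-n_{j}\in S$, so condition (2) produces $c-n_{i}-n_{j}\in S$. Applying the inductive hypothesis, write $c-n_{i}-n_{j}=s_{0}''+\sum_{k}\mu_{k}n_{k}$, yielding a third decomposition $c=s_{0}''+n_{i}+n_{j}+\sum_{k}\mu_{k}n_{k}$. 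Subtracting $n_{i}$ and invoking uniqueness for $c-n_{i}$ (available by induction) forces the $n_{j}$-coefficient in the first decomposition to equal $1+\mu_{j}\geq 1$, contradicting $\lambda_{j}=0$.

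\textbf{Main obstacle.} The crux is promoting the local two-element swap of condition (2) to the global uniqueness of Apéry decomposition via the case analysis above; once unique decomposition is established, the freeness $\k[S]=\bigoplus_{s_{0}\in\mathrm{Ap}(S)}R\cdot t^{s_{0}}$ and hence Cohen--Macaulayness are automatic.
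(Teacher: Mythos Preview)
The paper does not supply a proof of this theorem; it is quoted verbatim from \cite{RosalesCM} with citation only, so there is no in-paper argument to compare against. Your proof is correct and follows the standard route to this result: identify Cohen--Macaulayness of $\k[S]$ with freeness over the Noether normalization $R=\k[t^{n_1},\dots,t^{n_r}]$, and show that freeness is equivalent to uniqueness of the Ap\'ery decomposition $c=s_0+\sum_k\lambda_k n_k$, which in turn is exactly what condition~(2) guarantees via the inductive case analysis you give.

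One small wording issue: in the existence step you say termination holds ``because the $n_k$ have strictly positive entries''. The generators $n_k\in\N^k$ need not have all coordinates positive (e.g.\ $n_2$ could lie on a coordinate axis); what you actually use, both for termination and for the induction on $|c|$, is only that each $n_k$ is nonzero in $\N^k$, so that $|n_k|>0$ and subtraction strictly decreases the coordinate sum. This is automatic and the argument goes through unchanged.
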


When the subsemigroups are in $\N^2$ the above result can be reformulated.

\begin{corollary}\label{C-M}
Let $\FraS\subseteq \N^2$,
the following conditions are equivalent:
\begin{enumerate}
\item $S$ is C-M.
\item For all $a\in\FraC\setminus S$, $a+n_1$ or $a+n_2$ does not belong to $S$.
\end{enumerate}
\end{corollary}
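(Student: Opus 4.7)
The plan is to specialize Theorem \ref{C-M_Rosales} to $r=2$, where the only admissible pair of distinct indices is $(i,j)=(1,2)$. In this case the C-M criterion reads: for every $a',b'\in S$ with $a'+n_1=b'+n_2$, the common value $c:=a'-n_2=b'-n_1$ lies in $S$. I will show this is precisely the contrapositive of condition $(2)$, namely that every $c\in\FraC$ satisfying both $c+n_1\in S$ and $c+n_2\in S$ already belongs to $S$.

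For $(1)\Rightarrow(2)$, I argue by contradiction. Suppose $a\in\FraC\setminus S$ with $a+n_1,a+n_2\in S$. Setting $a'=a+n_1$ and $b'=a+n_2$ gives $a'+n_2=b'+n_1$ with $a',b'\in S$, so Theorem \ref{C-M_Rosales} forces $a=a'-n_1\in S$, a contradiction. For $(2)\Rightarrow(1)$, take $a',b'\in S$ with $a'+n_1=b'+n_2$ and let $c=a'-n_2=b'-n_1\in\Z^2$. The key step is to establish $c\in\FraC=L_{\Q_\geq}(S)\cap\N^2$; once that is done, $c+n_1=b'\in S$ and $c+n_2=a'\in S$, and $(2)$ forces $c\in S$, which is exactly what the Rosales criterion demands.

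The main obstacle is the cone-membership check for $c$, and this is where simpliciality enters. Since $r=\dim S=2$, the vectors $n_1,n_2$ are $\Q$-linearly independent and $L_{\Q_\geq}(S)=L_{\Q_\geq}(\{n_1,n_2\})$. Writing $a'=\alpha n_1+\beta n_2$ and $b'=\alpha' n_1+\beta' n_2$ with nonnegative rational coefficients, the relation $a'+n_1=b'+n_2$ together with linear independence gives $\alpha+1=\alpha'$ and $\beta=\beta'+1\geq 1$. Hence $c=\alpha n_1+(\beta-1)n_2\in L_{\Q_\geq}(S)$; since $c\in\Z^2$ and the cone is contained in $\R^2_\geq$, we conclude $c\in\N^2\cap L_{\Q_\geq}(S)=\FraC$. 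Everything else is a mechanical reformulation of Theorem \ref{C-M_Rosales} in the $r=2$ setting.
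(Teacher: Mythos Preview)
Your proof is correct and follows essentially the same approach as the paper: both directions reduce to Theorem~\ref{C-M_Rosales}, and the key step in $(2)\Rightarrow(1)$---writing $a',b'$ uniquely as nonnegative rational combinations of $n_1,n_2$ and using the relation to force the relevant coefficient to be at least $1$, hence $c\in\FraC$---is exactly what the paper does (with $\alpha,\beta,\lambda_i,\mu_i$ in place of your $a',b',\alpha,\beta$). Your write-up is in fact slightly more explicit about why $c\in\N^2$, but the argument is the same.
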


\begin{proof}

Assume that there exists $a\in \FraC\setminus S$ such that $\alpha=a+n_2\in S$ and $\beta=a+n_1\in S$. Clearly $\alpha+n_1=\beta+n_2$, but $\alpha-n_2=\beta-n_1=a\not\in S$, which implies that $S$ is not C-M (see condition 2 in Theorem \ref{C-M_Rosales}).

Conversely, let now $\alpha,\beta$ be two elements of $S$ satisfying $\alpha+n_1=\beta+n_2$. Since $\alpha,\beta\in L_{\Q_\geq}(\{n_1,n_2\})$, they can be uniquely expressed as $\alpha=\lambda_1n_1+\lambda_2n_2$ and $\beta=\mu_1n_1+\mu_2n_2$ with $\lambda_1,\lambda_2,\mu_1,\mu_2\in \Q_{\geq}$.
So $\beta=\alpha+n_1-n_2=(\lambda_1+1)n_1+(\lambda_2-1)n_2$ obtaining that
$\lambda_2-1=\mu_2\geq 0$ and hence $\lambda_2\geq 1$. Analogously, we obtain that $\mu_1\geq 1$. Thus $\alpha-n_2=\beta-n_1\in L_{\Q_\geq}(\{n_1,n_2\})$. Taking now $a=\alpha-n_2=\beta-n_1$ and using the hypothesis,
since $a+n_1=\beta\in S$ and $a+n_2=\alpha\in S$, the element $a$ belongs to $S$. We conclude that $S$ is C-M.

\end{proof}

\begin{lemma}\label{lemma_no_C-M}
Let $S\subset \N^2$ be a simplicial affine semigroup such that $\int (\FraC) \setminus \int(S)$ is nonempty finite set, then $S$ is not C-M.
\end{lemma}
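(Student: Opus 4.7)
The plan is to apply Corollary \ref{C-M} in its contrapositive form: I need to exhibit an $a\in \FraC\setminus S$ for which both $a+n_1\in S$ and $a+n_2\in S$. The hypothesis provides a nonempty finite set of ``interior holes,'' and my job is to translate one of them into the desired configuration.

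First I would unpack the hypothesis by observing that $\mathrm{int}(\FraC)\setminus\mathrm{int}(S)$ coincides with $H:=\mathrm{int}(\FraC)\setminus S$. This is because $L_{\Q_\geq}(S)=L_{\Q_\geq}(\FraC)$, so the two semigroups share the same frontier; hence any element of $S$ that lies in $\mathrm{int}(\FraC)$ is automatically in $\mathrm{int}(S)$. Thus $H$ is a finite, nonempty set, and every $a_0\in H$ admits a representation $a_0=\lambda_1 n_1+\lambda_2 n_2$ with $\lambda_1,\lambda_2>0$, so that each translate $a_0+j n_i$ (for $j\geq 0$, $i\in\{1,2\}$) still lies in $\mathrm{int}(\FraC)$.

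The main construction is a two-step ``push'' argument. Starting from any $a_0\in H$, I would let $k$ be the largest nonnegative integer such that $a_0+kn_1\in H$; such $k$ exists since the translates $a_0+jn_1$ all lie in $\mathrm{int}(\FraC)$ and $H$ is finite. Set $a_1:=a_0+kn_1$; then by maximality $a_1+n_1\in \mathrm{int}(\FraC)\setminus H\subseteq S$. Repeating this procedure along $n_2$ starting from $a_1$, I would produce $a:=a_1+\ell n_2\in H$ with $a+n_2\in S$. The key observation is that automatically $a+n_1=(a_1+n_1)+\ell n_2\in S$, being a sum of two elements of $S$. So $a\in \FraC\setminus S$ satisfies $a+n_1,a+n_2\in S$, and Corollary \ref{C-M} forces $S$ to fail the Cohen-Macaulay property.

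The only delicate point is verifying that the iterative pushing stays inside $\mathrm{int}(\FraC)$, so that finiteness of $H$ actually forces termination in $S$; once the strict positivity of the coefficients $\lambda_1,\lambda_2$ is noted, this is immediate, and the rest of the proof is combinatorial bookkeeping.
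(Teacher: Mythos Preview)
Your proof is correct, but the route differs from the paper's. The paper selects in one stroke the element $a\in \int(\FraC)\setminus\int(S)$ of \emph{maximal Euclidean distance} to the origin; since $S\subset\N^2$, adding either $n_1$ or $n_2$ strictly increases that distance, so $a+n_1$ and $a+n_2$ lie in $\int(\FraC)$ but outside the finite hole set, hence in $S$, and Corollary~\ref{C-M} finishes. Your argument instead performs a two-step lattice push, first along $n_1$ and then along $n_2$, exploiting closure of $S$ under addition to carry the $n_1$-membership through the second push. Both arguments rest on the same identification $\int(\FraC)\setminus\int(S)=\int(\FraC)\setminus S$ that you spell out explicitly. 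The paper's metric trick is shorter because a single scalar potential handles both directions at once; your combinatorial version is a touch longer but metric-free and would adapt more readily to settings where no convenient monotone norm is available.
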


\begin{proof}

Let $a\in \int(\FraC)\setminus \int(S)$ such that $\d(a)=\max \{\d(a')| a'\in \int (\FraC) \setminus \int(S)\}$. The elements $a+n_1$ and $a+n_2$ are in $S$ and by Corollary \ref{C-M} the semigroup $S$ is not C-M.
\end{proof}

In \cite[Section 4]{RosalesCM} it is  given the following result that characterizes  Gorenstein semigroups.

\begin{theorem}\label{Gorenstein_rosales}
The following conditions are equivalent:
\begin{enumerate}
\item $S$ is Gorenstein.
\item $S$ is C-M and $\cap_{i=1}^r \ap(n_i)$ has a unique maximal element (with respect to the order defined by $S$).
\end{enumerate}
\end{theorem}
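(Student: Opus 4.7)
Since Gorensteinness implies Cohen-Macaulayness, both implications of the equivalence are carried out assuming $S$ is C-M; the plan is to reduce Gorensteinness of $\k[S]$ to a one-dimensional socle computation in a suitable Artinian quotient, and then to translate that socle combinatorially into the set of maximal elements of $\bigcap_{i=1}^r \ap(n_i)$.

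Because $S$ is simplicial with extremal rays generated by $n_1,\ldots,n_r$, the monomials $\chi^{n_1},\ldots,\chi^{n_r}$ form a homogeneous system of parameters of $\k[S]$; under the C-M hypothesis they are even a regular sequence. By the standard ``Gorenstein $=$ C-M plus type one'' criterion, $\k[S]$ is Gorenstein if and only if the Artinian quotient $A=\k[S]/(\chi^{n_1},\ldots,\chi^{n_r})$ has a one-dimensional socle. The defining ideal $I=(\chi^{n_1},\ldots,\chi^{n_r})$ is $S$-graded and, as a $\k$-vector space, equals $\bigoplus \k\chi^s$ over those $s\in S$ with $s-n_i\in S$ for some $i$. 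Its complement in $\k[S]$ is therefore spanned by the monomials $\chi^s$ with $s\in \bigcap_{i=1}^r \ap(n_i)$, and these give a $\k$-basis of $A$, with product $\bar\chi^s\cdot\bar\chi^{s'}=\bar\chi^{s+s'}$ if $s+s'\in \bigcap_i\ap(n_i)$ and $0$ otherwise.

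Because $A$ inherits the $S$-grading, its socle $\mathrm{Soc}(A)=\mathrm{Ann}_A(\mathfrak{m}_A)$ is likewise $S$-graded and hence spanned by monomial classes. A class $\bar\chi^s$ with $s\in\bigcap_i\ap(n_i)$ lies in $\mathrm{Soc}(A)$ if and only if $\bar\chi^s\bar\chi^{s'}=0$ for every $s'\in S\setminus\{0\}$, i.e. $s+s'\notin \bigcap_i \ap(n_i)$ for all such $s'$. Unwinding this under the natural order $s\le s'\Leftrightarrow s'-s\in S$, the condition says precisely that no element of $\bigcap_i\ap(n_i)$ strictly dominates $s$, i.e. that $s$ is maximal in $\bigcap_i\ap(n_i)$. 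Therefore $\dim_\k \mathrm{Soc}(A)$ equals the number of maximal elements of $\bigcap_i\ap(n_i)$, and the equivalence of (1) and (2) follows.

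The main obstacle is the initial passage from Gorensteinness of the non-local graded ring $\k[S]$ to the type-one condition on the Artinian quotient $A$: this step relies on (i) the $n_i$ giving a homogeneous system of parameters by simpliciality, (ii) C-Mness upgrading this system to a regular sequence, and (iii) Gorensteinness being preserved and reflected modulo a regular sequence in the graded (equivalently, localized at the irrelevant maximal ideal) setting. Once these standard facts are in place, the rest of the argument is the purely combinatorial socle identification carried out above.
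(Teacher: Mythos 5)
The paper does not prove this statement at all: it is quoted verbatim from Rosales and Garc\'{\i}a-S\'anchez \cite[Section 4]{RosalesCM} as a known characterization, so there is no internal proof to compare against. Your argument is correct and is essentially the standard proof of that cited result: since $S$ is simplicial, $\chi^{n_1},\ldots,\chi^{n_r}$ is a homogeneous system of parameters, C-Mness makes it a regular sequence, and the Artinian quotient has monomial basis indexed by $\bigcap_{i=1}^r \ap(n_i)$ with socle spanned exactly by the classes of the maximal elements in the $S$-order, so Gorenstein $\Leftrightarrow$ type one $\Leftrightarrow$ unique maximal element. The only steps deserving explicit justification are the ones you already flag, namely that $\k[S]$ is a finite module over $\k[\chi^{n_1},\ldots,\chi^{n_r}]$ (each generator $\chi^{n_{r+j}}$ is integral over it because some positive multiple of $n_{r+j}$ lies in $\langle n_1,\ldots,n_r\rangle$), which makes the quotient Artinian and $\bigcap_i\ap(n_i)$ finite, and that Gorensteinness passes to and lifts from the quotient by a regular sequence in the graded-local setting.
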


\section{Some results for convex body semigroups}\label{sec2}

Let $F\subset \R^r_\ge$ be a convex body and assume that $\FraF=\bigcup_{i=0}^{\infty} F_i\cap \N^r$ is an affine convex body semigroup minimally generated by $\{n_1,\ldots ,n_r,n_{r+1},\ldots ,n_{r+m}\}$ where $F_i=i\cdot F$ with $i\in \N$.
It can easily obtained an algorithm to check if an element $P$ belongs to $\FraF$. Just consider the ray $\tau$ defined by $P$, the set $\tau\cap F$ is a segment $\overline{AB}$ with $\d(A)\leq \d(B)$. To check if $P\in\FraF$, we consider $\{k\in\N|\frac{\d(P)}{\d(B)}\leq k \leq \frac{\d(P)}{\d(A)}\}$, if this set is nonempty then $P\in\FraF$.
The affine semigroup $L_{\Q_\geq}(\FraF)\cap \N^r$ is denoted by $\FraC$ and for $r=2$ it is a simplicial semigroup if and only if $F\cup\{O\}\subset \R^2_\ge$ has at least three non-aligned points. Assume $\FraF$ is simplicial, then denote by $\tau_1$ and $\tau_2$ its extremal rays with $\tau_1$ the ray with greater slope (note that $Fr(L_{\Q_\geq}(G))=\{\tau_1,\tau_2\}$) and by
$n_1\in \tau_1$ the element of $\FraF\cap \tau_1$ with less module, similarly define $n_2\in\tau_2$.

\begin{lemma}\label{lema_igual_interior_C-M}
Let  $\FraF\subset \N^2$ be a simplicial affine convex body semigroup not equal to $\FraC$ such that $\int (\FraC) = \int(\FraF)$. The semigroup $\FraF$ is C-M if and only if $\FraF\cap \tau_i=\langle n_i \rangle$ for $i=1,2$.
\end{lemma}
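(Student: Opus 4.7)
The plan is to apply Corollary \ref{C-M} and exploit the hypothesis $\int(\FraC)=\int(\FraF)$, which forces $\FraC\setminus\FraF\subseteq Fr(L_{\Q_\geq}(\FraF))=\tau_1\cup\tau_2$, so the whole analysis will take place on the two extremal rays. A key preliminary observation is that for any $a\in\tau_1\setminus\{0\}$ the sum $a+n_2$ is a strictly positive combination of a vector on $\tau_1$ with one on $\tau_2$, hence it lies in $\int(L_{\Q_\geq}(\FraF))=\int(\FraF)\subseteq\FraF$; symmetrically for $a\in\tau_2\setminus\{0\}$. Consequently, when applying Corollary \ref{C-M} to an element $a\in\tau_i\cap(\FraC\setminus\FraF)$, only the sum $a+n_i$ will need to be scrutinized.

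For the implication ($\Leftarrow$), I take an arbitrary $a\in\FraC\setminus\FraF$, which by the above lies on some $\tau_i$, say $\tau_1$. If $a+n_1\in\FraF$ then $a+n_1\in\FraF\cap\tau_1=\langle n_1\rangle$, whence $a\in\langle n_1\rangle\subseteq\FraF$, contradicting $a\notin\FraF$. Therefore $a+n_1\notin\FraF$, and Corollary \ref{C-M} yields that $\FraF$ is C-M.

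For ($\Rightarrow$) I argue contrapositively. Assuming, say, $\FraF\cap\tau_1\supsetneq\langle n_1\rangle$, I pick $b\in(\FraF\cap\tau_1)\setminus\langle n_1\rangle$ of minimal Euclidean distance and set $a:=b-n_1$. Because $n_1$ has the least positive module in $\FraF\cap\tau_1$ and $b\neq n_1$, we have $\d(b)>\d(n_1)$, so $a$ is a nonzero nonnegative integer multiple of the primitive lattice vector on $\tau_1$; hence $a\in\FraC\cap\tau_1$. The minimality of $b$ then forbids $a\in\FraF$: otherwise $a\in\FraF\cap\tau_1$ with $\d(a)<\d(b)$, forcing $a\in\langle n_1\rangle$ and therefore $b=a+n_1\in\langle n_1\rangle$, a contradiction. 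So $a\in\FraC\setminus\FraF$, while $a+n_1=b\in\FraF$ and $a+n_2\in\FraF$ by the preliminary observation; Corollary \ref{C-M} then shows that $\FraF$ is not C-M. The main obstacle is producing this witness $a$: the trick is to pick $b$ minimally in $(\FraF\cap\tau_1)\setminus\langle n_1\rangle$, which lets minimality do double duty — once to guarantee $\d(b)>\d(n_1)$ so that $a$ remains in the nonnegative cone, and again to exclude $a$ from $\FraF$.
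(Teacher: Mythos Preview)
Your proof is correct and follows the same overall strategy as the paper: reduce to the boundary rays via the hypothesis $\int(\FraC)=\int(\FraF)$ and then apply Corollary~\ref{C-M}. The ($\Leftarrow$) direction is identical to the paper's.

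For ($\Rightarrow$) there is a small but genuine difference in how the witness is produced. The paper argues that if $\FraF\cap\tau_1\neq\langle n_1\rangle$ then $F\cap\tau_1$ must be a segment, invokes the convex body structure to conclude that $(\FraC\setminus\FraF)\cap\tau_1$ is \emph{finite}, and takes $P$ to be its element of \emph{largest} module, so that $P+n_1$ is forced back into $\FraF$ by maximality. You instead pick the element $b\in(\FraF\cap\tau_1)\setminus\langle n_1\rangle$ of \emph{smallest} module and set $a=b-n_1$, letting minimality certify $a\notin\FraF$. Your construction is slightly more self-contained: it does not appeal to the finiteness of the gap set on $\tau_1$ (which in the paper comes from the geometry of $\bigcup iF$), and would in fact work for any simplicial affine semigroup satisfying $\int(\FraC)=\int(\FraF)$, not just convex body semigroups. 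The paper's choice of the farthest gap is perhaps more in line with the rest of the article (cf.\ Lemma~\ref{lemma_no_C-M}), but both arguments are equally valid here.
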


\begin{proof}

Assume that $\FraF$ is C-M and  $\FraF\cap \tau_1$ is not generated by $n_1$. In such case, $F\cap \tau_1$ is a segment and $(\FraC\setminus\FraF)\cap \tau_1$ has a finite number of elements. Take $P\in (\FraC\setminus \FraF)\cap \tau_1$ the element with largest module, then $P+n_1\in\FraF$. Now, since $int(\FraC)=int(\FraF)$, $P+n_2\in\FraF$. Thus $\FraF$ is not C-M, which is a contradiction.

Suppose now that $\FraF\cap \tau_i=\langle n_i \rangle$ for $i=1,2$.
Given $P\in\FraC\setminus \FraF$, we are going to prove that $P+n_1\not\in\FraF$ or $P+n_2\not\in\FraF$.
Assume without loss of generality that $P\in(\FraC\setminus\FraF)\cap \tau_1$, using that $\FraF\cap \tau_1=\langle n_1\rangle$ we obtain that $P+n_1\not \in\FraF$.

\end{proof}

Define the convex hull of a subset $G$ of $\R^n$ as the the intersection of all convex sets containing $G$, denote it by ${\rm ConvexHull}(G)$ and denote by $\overline{AB}$ the segment defined by  two points $A$ and $B$.
Let  $\BfH\subset \N^2$  be the set
$\left({\rm ConvexHull}(\{O,n_1,n_2,n_1+n_2\})\setminus \{ \overline{On_1},\overline{On_2},n_1+n_2\}\right)\cup \{O\}$.

\begin{lemma}\label{lema_apery}
Let  $\FraF\subset \N^2$ be a simplicial affine convex body semigroup such that $\int (\FraC) = \int(\FraF)$ and $\FraF\cap \tau_i=\langle n_i \rangle$ for $i=1,2$. Then $\ap(n_1)\cap \ap(n_2)=\FraF \cap \BfH.$
\end{lemma}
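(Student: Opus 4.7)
The plan is to write both sides of the desired equality explicitly in coordinates and then compare. First I would unpack $\BfH$: every $p$ in the closed parallelogram ${\rm ConvexHull}(\{O,n_1,n_2,n_1+n_2\})$ admits a unique representation $p=\lambda_1 n_1+\lambda_2 n_2$ with $(\lambda_1,\lambda_2)\in[0,1]^2$ (since $\{n_1,n_2\}$ is a basis of $\R^2$ by simpliciality), and direct inspection of which boundary pieces are removed yields
\[
\BfH=\{O\}\cup\{\lambda_1 n_1+\lambda_2 n_2:\ 0<\lambda_1\leq 1,\ 0<\lambda_2\leq 1,\ (\lambda_1,\lambda_2)\neq(1,1)\}.
\]

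For the forward inclusion $\FraF\cap\BfH\subseteq \ap(n_1)\cap\ap(n_2)$, I would take $s=\lambda_1 n_1+\lambda_2 n_2\in \FraF\cap\BfH$ and check $s-n_i\notin \FraF$ for $i=1,2$. If $s=O$ this is immediate. Otherwise $\lambda_i\in(0,1]$ with not both equal to $1$, and for each $i\in\{1,2\}$: if $\lambda_i<1$ then the coefficient of $n_i$ in $s-n_i$ is negative, so $s-n_i\notin L_{\Q_\geq}(\{n_1,n_2\})\supseteq\FraF$; if $\lambda_i=1$ (which forces $\lambda_{3-i}\in(0,1)$) then $s-n_i=\lambda_{3-i}n_{3-i}\in\tau_{3-i}$, and the hypothesis $\FraF\cap\tau_{3-i}=\langle n_{3-i}\rangle$ forces $s-n_i\in\FraF$ only if $\lambda_{3-i}\in\N$, which fails.

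For the reverse inclusion I would prove the contrapositive: given $s=\lambda_1 n_1+\lambda_2 n_2\in\FraF\setminus\BfH$, produce $j$ with $s-n_j\in\FraF$. If $s\in\tau_i\setminus\{O\}$ for some $i$, then $s\in\FraF\cap\tau_i=\langle n_i\rangle$ forces $\lambda_i$ to be a positive integer and $s-n_i=(\lambda_i-1)n_i\in\langle n_i\rangle\subseteq\FraF$. If $s=n_1+n_2$, then $s-n_1=n_2\in\FraF$. In the remaining case $\lambda_1,\lambda_2>0$ and $\max(\lambda_1,\lambda_2)>1$; assuming $\lambda_1>1$ by symmetry, $s-n_1=(\lambda_1-1)n_1+\lambda_2 n_2$ has strictly positive rational coefficients and integer coordinates (since $s,n_1\in\N^2$), so $s-n_1\in \int(\FraC)=\int(\FraF)\subseteq\FraF$ by hypothesis.

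The main obstacle is the bookkeeping of boundary cases: $\BfH$ keeps the ``far'' edges $\overline{n_1(n_1+n_2)}$ and $\overline{n_2(n_1+n_2)}$ (minus the vertex $n_1+n_2$) while excising the ``near'' edges $\overline{On_i}$ (apart from $O$), and this asymmetry must be matched precisely against the Ap\'{e}ry conditions for $n_1$ and $n_2$. The hypothesis $\FraF\cap\tau_i=\langle n_i\rangle$ is exactly what prevents $s-n_i$ from accidentally landing in $\FraF$ on the opposite ray when $\lambda_i=1$; without it, the Apéry intersection would be strictly larger than $\FraF\cap\BfH$.
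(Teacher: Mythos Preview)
Your proof is correct and follows essentially the same approach as the paper's: both argue by writing points uniquely as $\lambda_1 n_1+\lambda_2 n_2$ and splitting into the ray versus interior cases. Your version simply carries out in full the boundary subcases (the far edges $\lambda_i=1$ and the vertex $n_1+n_2$) that the paper leaves implicit or calls ``trivial'', and in particular makes explicit that the hypothesis $\FraF\cap\tau_i=\langle n_i\rangle$ is precisely what is needed for the forward inclusion on those far edges.
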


\begin{proof}
Trivially, $\FraF \cap \BfH\subset \ap(n_1)\cap \ap(n_2).$

Let $P\in \FraF\setminus \BfH$. If $P\in \int (\FraF)$, then $P-n_1\in \int(\FraF)$ or $P-n_2\in \int(\FraF)$, so $P\not \in  \ap(n_1)\cap \ap(n_2)$. If $P\in \FraF\cap \tau_i=\langle n_i \rangle$, then  $P-n_i\in \FraF$ and again $P\not \in  \ap(n_1)\cap \ap(n_2)$ .

\end{proof}

\begin{corollary}\label{corollary_igual_interior_Gorenstein}
Let  $\FraF\subset \N^2$ be a C-M simplicial affine convex body semigroup such that $\int (\FraC) = \int(\FraF).$ The semigroup $\FraF$ is Gorenstein if and only if there exists a unique maximal element in the set $\FraF \cap \BfH.$
\end{corollary}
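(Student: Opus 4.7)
The plan is to obtain the corollary as a direct synthesis of the three results already in place: Theorem \ref{Gorenstein_rosales}, Lemma \ref{lema_igual_interior_C-M}, and Lemma \ref{lema_apery}. No new combinatorics should be needed.

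First I would invoke Theorem \ref{Gorenstein_rosales} with $r=2$: since $\FraF$ is already assumed C-M, being Gorenstein is equivalent to $\ap(n_1)\cap\ap(n_2)$ having a unique maximal element for the order $a\le b\iff b-a\in\FraF$. Thus the whole task reduces to rewriting this intersection of Ap\'ery sets in terms of $\FraF\cap\BfH$.

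Next, I would use the hypothesis that $\FraF$ is C-M together with $\int(\FraC)=\int(\FraF)$ to apply Lemma \ref{lema_igual_interior_C-M} (the case $\FraF=\FraC$ is trivial/excluded by the non-normality context), obtaining $\FraF\cap\tau_i=\langle n_i\rangle$ for $i=1,2$. These are exactly the extra hypotheses needed to invoke Lemma \ref{lema_apery}, which immediately yields the identification
\[
\ap(n_1)\cap\ap(n_2)=\FraF\cap\BfH.
\]

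Combining the two steps, $\FraF$ is Gorenstein if and only if $\FraF\cap\BfH$ has a unique maximal element, as claimed. The only subtle point worth checking — and the one I would flag as the main (though minor) obstacle — is that the notion of maximal element is preserved under the equality in Lemma \ref{lema_apery}: the order on the Ap\'ery set used by Theorem \ref{Gorenstein_rosales} is the order induced by $\FraF$, and since $\FraF\cap\BfH$ sits inside $\FraF$ the same order is inherited on it, so maximality on both sides refers to the same relation. Once that is noted, no further computation is required.
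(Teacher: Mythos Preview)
Your proposal is correct and follows essentially the same route as the paper's own proof, which simply invokes Theorem \ref{Gorenstein_rosales} and Lemma \ref{lema_apery}. You are in fact slightly more careful than the paper in explicitly using Lemma \ref{lema_igual_interior_C-M} to verify the hypothesis $\FraF\cap\tau_i=\langle n_i\rangle$ required by Lemma \ref{lema_apery}; the paper leaves this implicit.
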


\begin{proof}
By Theorem \ref{Gorenstein_rosales}, $\FraF$ is Gorenstein if and only if $\ap(n_1)\cap \ap(n_2)$ has a unique maximal element. By Lemma \ref{lema_apery}, we obtain that $\ap(n_1)\cap \ap(n_2)=\FraF \cap \BfH$ which proves the result.
\end{proof}

\section{C-M and Gorenstein affine circle semigroups}\label{sec3}

Let $C$ be the circle with center  $(a,b)$ and radius $r>0$ and as in \cite{ACBS}, define  $C_i$  as the circle with center $(ia,ib)$ and radius $ir$ and $\FraS=\bigcup_{i=0}^\infty C_i\cap \N^2$ the semigroup generated by $C$. Using the notation of the preceding sections, denote by $\FraC$ the positive integer cone $L_{\Q_\geq}(C\cap \R ^2_{\geq})\cap \N^2$, $\tau_1$ and $\tau_2$ the extremal rays of $\FraC$ and $\int(\FraC)=\FraC\setminus\{\tau_1,\tau_2\}.$
In \cite[Theorem 18]{ACBS} affine circle semigroups are characterized and it can be easily prove that
when $C\cap \R^2_\geq$ has at least two points the circle semigroup is simplicial.
In this section, we consider that $\FraS$ is a simplicial affine circle semigroup.
The following result characterizes C-M affine circle semigroups.

\begin{proposition}\label{circulos_C-M}
Assume that $\FraS\neq\FraC$, then
$\FraS$ is a C-M semigroup if and only if $\int (\FraS)=\int (\FraC)$ and  $\FraS\cap \tau_i=\langle n_i \rangle$ for $i=1,2$.
\end{proposition}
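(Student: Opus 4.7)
The backward direction is immediate from Lemma~\ref{lema_igual_interior_C-M}. For the forward direction, assume $\FraS$ is C-M; I will first show $\int(\FraS)=\int(\FraC)$, after which $\FraS\cap\tau_i=\langle n_i\rangle$ follows from the other direction of the same lemma.

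Suppose for contradiction a lattice point $a^{(0)}\in\int(\FraC)\setminus\FraS$ exists. Translating by $n_1$ or $n_2$ preserves the interior of the cone, so Corollary~\ref{C-M} forces at least one of $a^{(0)}+n_1$, $a^{(0)}+n_2$ to lie outside $\FraS$; iterating produces an infinite sequence $a^{(0)},a^{(1)},a^{(2)},\ldots\in\int(\FraC)\setminus\FraS$ whose consecutive differences lie in $\{n_1,n_2\}$. I will contradict the existence of such a sequence by showing that $a^{(k)}$ must eventually land in $\FraS$.

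Applying the membership test described in Section~\ref{sec2}, $Q\in\FraS$ exactly when the interval $[d(Q)/d(B),d(Q)/d(A)]$ contains a positive integer, where $\overline{AB}$ is the chord cut from $C$ by the ray through $Q$. A short computation starting from $|Q-ic|^2=(ir)^2$ (with $c,r$ the center and radius of $C$) shows this interval has length $\tfrac{2}{|c|^2-r^2}\sqrt{f(Q)}$ where $f(Q):=\langle c,Q\rangle^2-(|c|^2-r^2)|Q|^2$, so $f(Q)\geq(|c|^2-r^2)^2/4$ already forces $Q\in\FraS$. The form $f$ is nonnegative on $\FraC$ and vanishes precisely on those extremal rays that are tangent to $C$. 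Writing $a^{(k)}=a^{(0)}+p\,n_1+q\,n_2$ with $p+q=k$ and expanding, the tangent identity $\langle c,n_i\rangle^2=(|c|^2-r^2)|n_i|^2$ (when $\tau_i$ is tangent) kills the pure $p^2$ or $q^2$ terms, and one checks that the surviving mixed coefficient of $pq$ and the linear coefficients of $p,q$ are strictly positive: the mixed one because $n_1,n_2$ span different rays, the linear ones because $a^{(0)}$ is strictly interior to the cone. A case split on whether the walk eventually uses only $n_1$, only $n_2$, or both infinitely often then gives $f(a^{(k)})\to\infty$ in every case, whence $a^{(k)}\in\FraS$ for $k$ large, contradicting the construction.

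This yields $\int(\FraS)=\int(\FraC)$, and Lemma~\ref{lema_igual_interior_C-M} then gives $\FraS\cap\tau_i=\langle n_i\rangle$. The main obstacle is establishing positivity of every relevant coefficient in the expansion of $f(a^{(k)})$ as a polynomial in $p,q$: the mixed-term positivity is of Cauchy--Schwarz type, but the linear-term positivity requires noting that the vector $|n_1|c-\sqrt{|c|^2-r^2}\,n_1$ is perpendicular to $n_1$ (by the tangent identity) and points into the cone, so has strictly positive inner product with any interior $a^{(0)}$.
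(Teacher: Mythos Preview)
Your argument is correct, but it takes a genuinely different route from the paper. The paper's proof of the forward implication is a two-line modular argument: it invokes \cite[Lemma~17]{ACBS}, which says that for a circle semigroup the set $\int(\FraC)\setminus\int(\FraS)$ is always finite, and then applies Lemma~\ref{lemma_no_C-M} (a nonempty finite interior gap set already contradicts C-M by taking the gap of maximal distance to the origin). Lemma~\ref{lema_igual_interior_C-M} then finishes, exactly as you do.

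By contrast, you avoid citing the external finiteness result and instead give a self-contained analytic argument: you use Corollary~\ref{C-M} to produce an infinite $\{n_1,n_2\}$-walk of interior gaps, compute the quadratic form $f(Q)=\langle c,Q\rangle^2-(|c|^2-r^2)|Q|^2$ governing the membership interval, and show $f(a^{(k)})\to\infty$ along any such walk via positivity of the bilinear cross-term and the linear terms. This is longer and more computational, but it has the virtue of being entirely internal to the paper and of making the circle geometry explicit (in particular, your observation that the tangent identity $\langle c,n_i\rangle^2=(|c|^2-r^2)|n_i|^2$ kills the pure quadratic terms while Cauchy--Schwarz keeps the cross term strictly positive is essentially a hands-on version of what \cite[Lemma~17]{ACBS} packages). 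Note that since $C\subset\R^2_\geq$ forces both extremal rays to be tangent to $C$ and $|c|>r$, the ``tangent'' case you analyze is in fact the only one that occurs here.
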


\begin{proof}
Let us suppose that $\FraS$ is C-M. Since $\FraS$ is a circle semigroup, if $\int(\FraC)\neq \int (\FraS)$, then  $\int(\FraC)\setminus \int (\FraS)$ is a nonempty finite set (see \cite[Lemma 17]{ACBS}). If this occurs, by Lemma \ref{lemma_no_C-M} $\FraS$ is not C-M and therefore $\int(\FraC)$ must be equal to $\int(\FraS)$.
Now, by Lemma \ref{lema_igual_interior_C-M} we obtain that $\FraS\cap \tau_i=\langle n_i \rangle$.

The other implication is obtained by Lemma \ref{lema_igual_interior_C-M}.
\end{proof}

The semigroup represented in Figure \ref{ejemplo_circulos_CM} is an instance of C-M circle semigroup.

\begin{figure}[h]
    \begin{center}
\begin{tabular}{|c|}\hline
\includegraphics[scale=0.35]{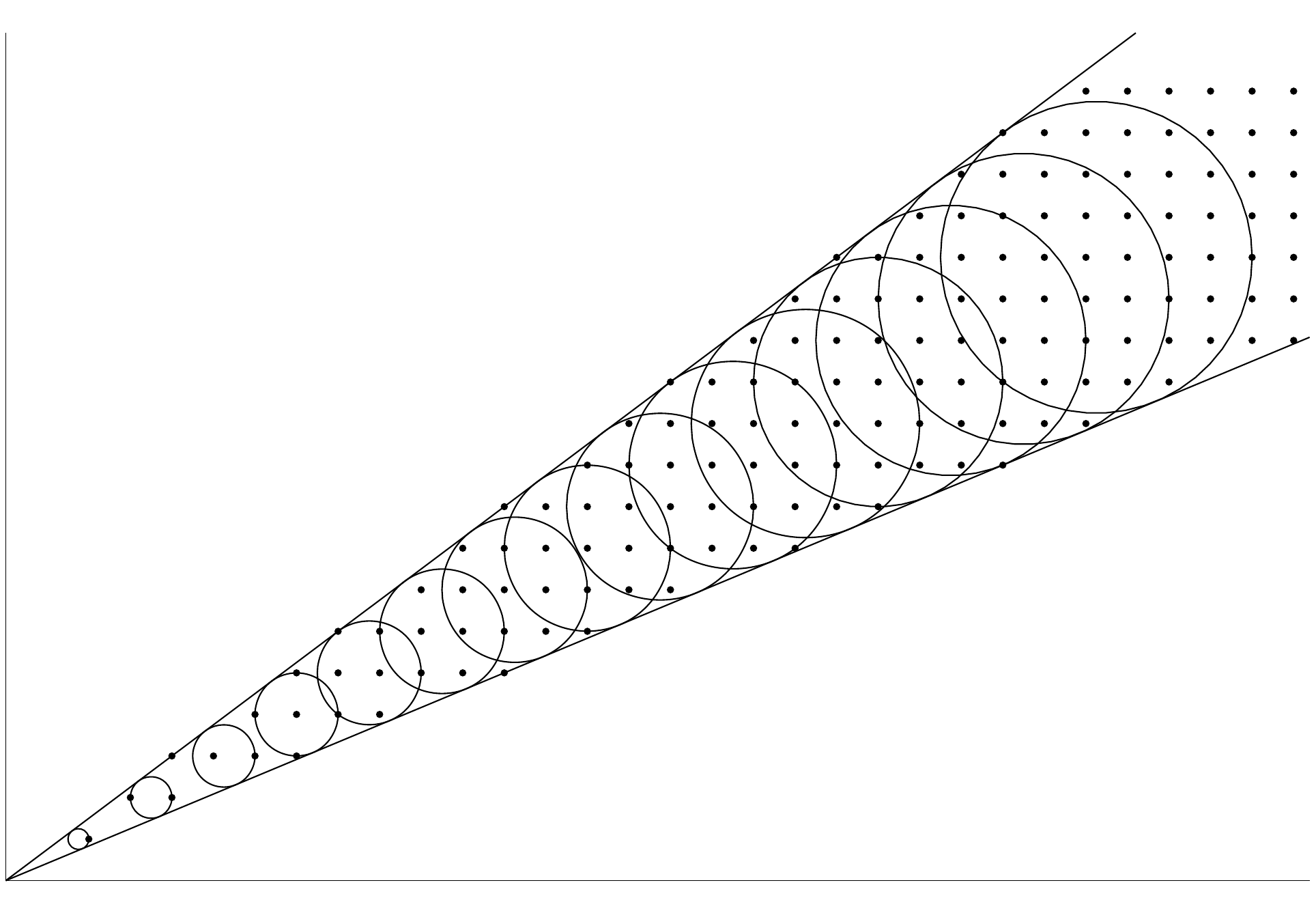}\\
Circle  center at $(7/4,1)$ and radius 1/4
\\ \hline
\end{tabular}
\caption{C-M circle semigroup.}\label{ejemplo_circulos_CM}
\end{center}
\end{figure}

The above characterization of C-M circle semigroups allows to obtain a characterization of Gorenstein affine circle semigroups.

\begin{corollary}\label{circulos_gorenstein}
Let $\FraS$ be a C-M circle semigroup.
$\FraS$ is Gorenstein if and only if there exists a unique maximal element in $\FraC \cap \BfH.$
\end{corollary}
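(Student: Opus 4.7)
The plan is to derive the statement as a specialization of Corollary \ref{corollary_igual_interior_Gorenstein}. The work reduces to verifying the hypotheses of that corollary for $\FraS$ and then identifying the two sets $\FraS\cap\BfH$ and $\FraC\cap\BfH$.

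First I would verify the geometric hypothesis $\int(\FraS)=\int(\FraC)$ together with $\FraS\cap\tau_i=\langle n_i\rangle$ for $i=1,2$. If $\FraS=\FraC$ then both conditions are immediate from the definition of $n_i$ as the primitive lattice vectors on the extremal rays. Otherwise Proposition \ref{circulos_C-M} applies directly to the C-M circle semigroup $\FraS$ and delivers both conditions at once. In either case Corollary \ref{corollary_igual_interior_Gorenstein} is applicable and yields: $\FraS$ is Gorenstein if and only if $\FraS\cap\BfH$ has a unique maximal element with respect to the order of $\FraS$.

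Next I would show $\FraS\cap\BfH=\FraC\cap\BfH$. A direct inspection of the definition of $\BfH$ shows that every point of $\BfH\setminus\{O\}$ lies either in the topological interior of the parallelogram $\mathrm{ConvexHull}(\{O,n_1,n_2,n_1+n_2\})$ or on one of the two open segments joining $n_i$ to $n_1+n_2$; in every case the point avoids both rays $\tau_1$ and $\tau_2$ and therefore belongs to $\int(\FraC)$. Using $\int(\FraC)=\int(\FraS)$, we obtain $(\BfH\setminus\{O\})\cap\FraC\subseteq \int(\FraS)\subseteq\FraS$, and since $O\in\FraS$ this gives $\BfH\cap\FraC\subseteq\FraS$; the reverse inclusion is immediate from $\FraS\subseteq\FraC$. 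This equality, combined with the previous step, finishes the proof.

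No substantive obstacle is anticipated: the statement is essentially the specialization of Corollary \ref{corollary_igual_interior_Gorenstein} to the circle setting, and the only new geometric point to check is that $\BfH\setminus\{O\}$ sits in the open cone, which is transparent from the construction of $\BfH$ as the fundamental parallelogram with its two $\tau_i$-edges removed.
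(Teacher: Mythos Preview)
Your proposal is correct and follows essentially the same route as the paper, which simply cites Corollary~\ref{corollary_igual_interior_Gorenstein} and Proposition~\ref{circulos_C-M}. The only addition is that you make explicit the identity $\FraS\cap\BfH=\FraC\cap\BfH$ (via the observation that $\BfH\setminus\{O\}\subset\int(\FraC)=\int(\FraS)$), a point the paper leaves tacit in its one-line ``trivial'' proof.
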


\begin{proof}
Trivial by Corollary \ref{corollary_igual_interior_Gorenstein} and Proposition \ref{circulos_C-M}.
\end{proof}

\section{C-M affine convex polygonal semigroups}\label{sec4}

In this section we denote by $F\subset \R^2_{\geq}$ a compact convex polygon with vertices $\CaP=\{P_1,\ldots ,P_t\}$ arranged in counterclockwise direction and by $\FraP=\bigcup_{i=0}^{\infty} F_i\cap \N^2$ its associated semigroup. Affine convex polygonal semigroups are characterized in \cite[Theorem 14]{ACBS}.
From now on, we consider that $\FraP$ is a simplicial affine convex polygonal semigroup no equal to $\FraC$ and that the polygon $F$ is not a single segment. Once again, let $\tau_1$ and $\tau_2$ be the extremal rays of $\FraC$  assuming $\tau_1$ with a slope greater than $\tau_2$.

We define now the different sets we need to characterize and check the properties we are interested on, these will divide the cone of the semigroup $\FraP$ in several parts with different properties.
We distinguish two cases, $F\cap \tau_i$ is a point or it is formed by more that one  point.

Assume $F\cap \tau_1=\{P_1\}$ with $P_1\in \CaP$, define $j$ as the least positive integer such that $j\overline{P_{1}P_{t}} \cap (j+1)\overline{P_{1}P_{2}}$ is not empty and formed by a point, we call it $V_1$ (the existence of $j$ is proven in \cite{ACBS}
and can be easily computed).
Denote by $T_1$ the triangle with vertices $\{ O, P_1, V_1-jP_1 \},$ and by $\stackrel{\circ}{T_1}$ its topological interior. By \cite[Lemma 11]{ACBS}, for every $h\in\N$ with $h\geq j$ the points $h\overline{P_{1}P_{t}} \cap (h+1)\overline{P_{1}P_{2}}$ are in the same straight line which we denote by $\nu_1$. With this notation it is easy to prove that for every
$h\in \N,$ $\big((\stackrel{\circ}{T_1}\cup (\overline{OP_1}\setminus\{O,P_1\}))+hP_1\big)\cap \FraP=\emptyset$ and that if $h\geq j$ then
$\big((\stackrel{\circ}{T_1}\cup (\overline{OP_1}\setminus\{O,P_1\}))+hP_1+n_1\big)\cap \N^2=\Big(\big((\stackrel{\circ}{T_1}\cup (\overline{OP_1}\setminus\{O,P_1\}))+hP_1\big)\cap \N^2\Big)+n_1$ (note that $n_1$ is a multiple of $P_1$
and denote as $j_1$ the integer  verifying $j_1P_1=jP_1+n_1$).
 This implies that the elements of
$\mathcal{B}_1=\{ D+\lambda n_1 | D\in \overline{(j_1P_1)(V_1+n_1)} \text{ and } \lambda\in \Q_{\ge} \}\cap\N^2$ belong to $\FraP$ or they are in $\stackrel{\circ}{T_1}\cup (\overline{OP_1}\setminus\{O,P_1\})+\mu P_1$ with $\mu$ a nonnegative integer, in particular if $P\in \mathcal{B}_1\setminus \FraP$ then $P+n_1\not\in \FraP$ and if $P\in\mathcal{B}_1\cap \FraP$ then $P-n_1\in \FraP$. Denote $\Upsilon_1$ the finite set ${\rm ConvexHull} (\{O,j_1P_1,V_1+n_1, \nu_1\cap \tau_2 \}) \setminus \bigcup _{h < j_1,h\in\N} \big((\stackrel{\circ}{T_1}\cup (\overline{OP_1}\setminus\{O,P_1\}))+hP_1\big).$

Analogously, if $F\cap \tau_2$ is a point for instance $P_1\in \CaP$ then there exists the least integer $j$ such that $j\overline{P_{1}P_{2}} \cap (j+1)\overline{P_{1}P_{t}}$ is equal to a single point $V_2$ in this case. Denote by $T_2$ the triangle with vertices $\{ O, P_1, V_2-jP_1 \},$ by $\nu_2$ the line containing the points $\{h\overline{P_{1}P_{2}} \cap (h+1)\overline{P_{1}P_{t}}| h\ge j, h\in\N\}$, by $j_1$ the integer fulfilling that $j_1P_1=jP_1+n_2$ and by $\mathcal{B}_2$ to the set $\{ D+\lambda n_2 | D\in \overline{(j_1P_1)(V_2+n_2)} \text{ and } \lambda\in \Q_{\ge} \}\cap \N^2$. All these set verify analogous properties to the properties verified by the sets defined in the above paragraph. Denote in this case by $\Upsilon_2$ to the finite ${\rm ConvexHull} (\{O,j_1P_1,V_2+n_2, \nu_2\cap \tau_1 \}) \setminus \bigcup _{h < j_1,h\in\N} \big((\stackrel{\circ}{T_2}\cup (\overline{OP_1}\setminus\{O,P_1\}))+hP_1\big).$

If $F\cap \tau_i$ is a segment for some  $i$, we take  $\nu_i=\tau_i$ and  $\Upsilon_i=\{O\}.$
With the above construction we define the finite set  $\Upsilon '=\{ P\in \Upsilon _1\cap \N^2| P+n_1,P+n_2\in \FraP \}$,  $\Upsilon ''=\{ P\in \Upsilon _2\cap \N^2| P+n_1,P+n_2\in \FraP \}$ and   $\Upsilon = (Q+\L_{\Q_\geq}(F))\cap \N^2$ where $Q\in \L_{\Q_\geq}(F)$ is the point $\nu_1\cap \nu_2$.
Note that the boundary of the set $\Upsilon$ intersects with two different sides of the polygon  $i_0F$ when $i_0\gg 0$ and therefore the set  $(\FraC\cap \Upsilon)\setminus \FraP$ is finite.
This construction is illustrated in Figure \ref{franja}.
\begin{figure}[h]
    \begin{center}
\begin{tabular}{|c|}\hline
\includegraphics[scale=0.45]{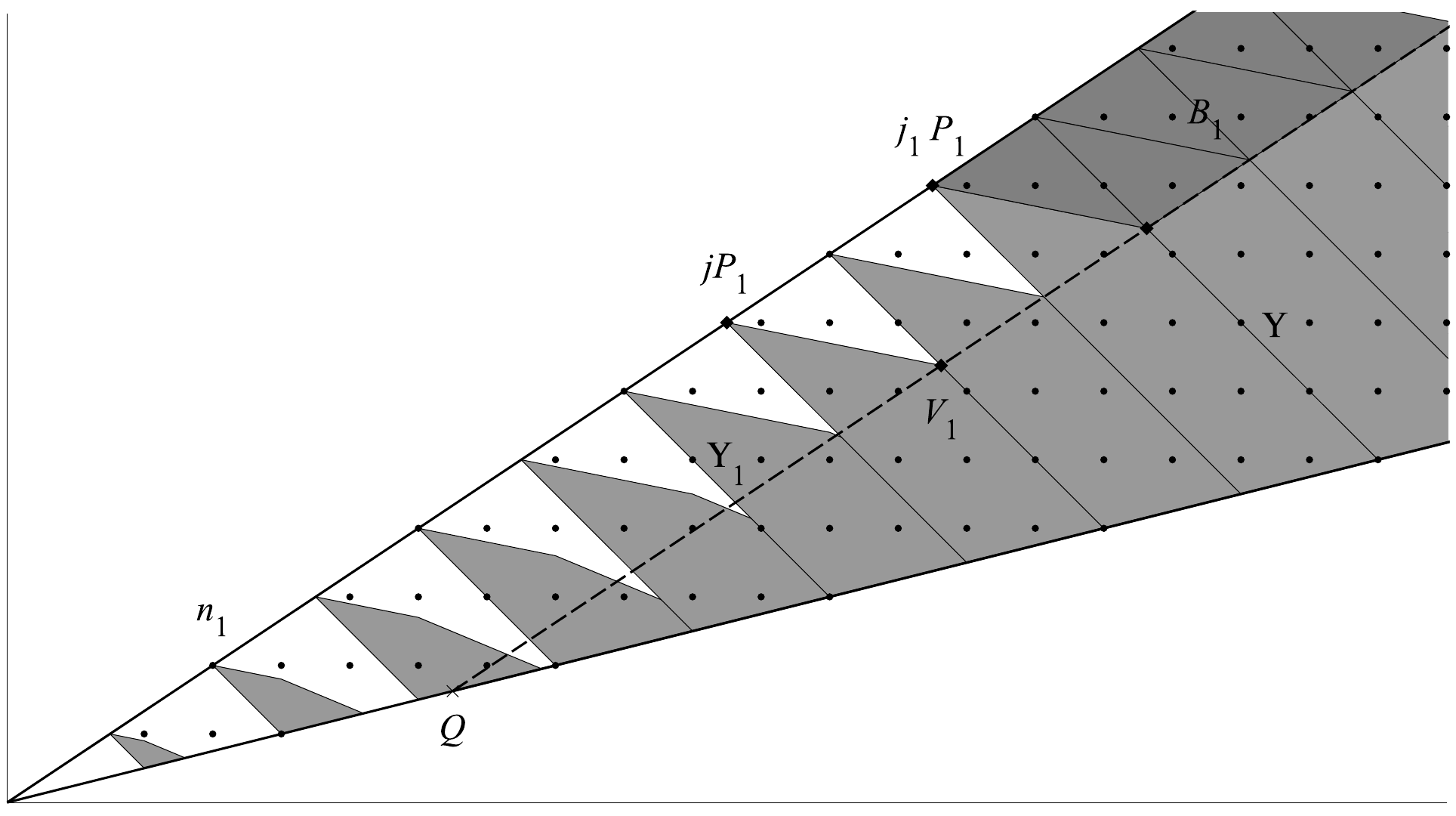} \\
Polygon $=\{( 2, 0.9),( 2, 0.5),( 2.6, 0.65),( 1.5, 1)\}$ \\ \hline
\end{tabular}
\caption{Construction for $F\cap \tau_1=\{P_1\}$ and $F\cap \tau_2$ a segment.}\label{franja}
\end{center}
\end{figure}
It can be observed that $\FraC$ is the union of the integer points of the set $\Upsilon$ and the stripes bounded by the parallel lines $\tau_1$  and $\nu_1$, and $\tau_2$ and $\nu_2$. In these partitions the following properties are satisfied:
\begin{enumerate}
\item
    Every point $P\in\FraC$ belonging to the stripes defined by the parallel lines $\tau_1$ and $\nu_1$, and $\tau_2$ and $\nu_2$ (with $F\cap\tau_i$ a single point) verifies:
    \begin{enumerate}
    \item If $P\in (\mathcal{B}_1\cup \mathcal{B}_2)\setminus\FraP$ then $P+n_1$ or $P+n_2$ does not belong to  $\FraP.$
    \item If $P\in \mathcal{B}_i\cap\FraP$ for some  $i$ then  $P-n_i$ belongs to  $\FraP.$
    \item
If $P\notin \mathcal{B}_1\cup \mathcal{B}_2$, then $P$ belongs to $\Upsilon_1 \cup \Upsilon _2$
or $(\stackrel{\circ}{T_i}\cup ((T_i\cap \tau_i)\setminus\{O,P_i\}))+\mu P_i $ with $\mu$ a nonnegative integer and $\{P_i\}=F\cap \tau_i$.
In case $P\in \Upsilon_1\cup\Upsilon_2$ but $P\not \in\FraP$  by Corollary \ref{C-M},  $\FraP$ is not C-M.
If $P$ belongs to
$(\stackrel{\circ}{T_i}\cup ((T_i\cap \tau_i)\setminus\{O,P_i\}))+\mu P_i $ we obtain that  $P+n_1$ or $P+n_2$ are not in $\FraP$ ($\FraP$ is still capable of being C-M).
\end{enumerate}
\item
    For $\FraC\cap \Upsilon$, if $\Upsilon \setminus\FraP$ is not empty then $\FraP$ is not C-M. It is enough to take $P$ as the element of $\Upsilon\setminus\FraP$ of greatest distance to the origin in order to verify that $P+n_1,P+n_2\in \FraP$ obtaining by Corollary \ref{C-M} that $\FraP$ is not C-M.
\end{enumerate}

\begin{lemma}\label{lemaUpsilon}
In the above conditions, for all point $P\in \FraC\setminus \FraP$ such that $P\notin  \Upsilon\cup \Upsilon'\cup \Upsilon'',$ $P+n_1$ or $P+n_2$ is not in $\FraP.$
\end{lemma}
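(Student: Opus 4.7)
The plan is to use the partition of the cone $\FraC$ into the inner region $\Upsilon$ and the two stripes bounded respectively by $\tau_1,\nu_1$ and by $\tau_2,\nu_2$, as set up in the paragraphs preceding the lemma. Since $P\in \FraC\setminus \FraP$ and $P\notin \Upsilon$, the point $P$ must lie in one of these two stripes; by symmetry I focus on the stripe between $\tau_1$ and $\nu_1$, which is nontrivial only when $F\cap \tau_1$ is a single point $P_1$ (when $F\cap \tau_1$ is a segment the stripe collapses to $\tau_1$, and since $\Upsilon_1=\{O\}$ in that case the situation is absorbed into $\Upsilon$).

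Within this stripe I would split into three sub-cases following item~1 of the enumeration preceding the statement. If $P\in \mathcal{B}_1$, then since $P\notin \FraP$, property~1(a) immediately gives $P+n_1\notin \FraP$ or $P+n_2\notin \FraP$. If $P\notin \mathcal{B}_1$ and $P$ lies in some translated triangle $(\stackrel{\circ}{T_1}\cup ((T_1\cap \tau_1)\setminus\{O,P_1\}))+\mu P_1$ with $\mu\in \N$, the discussion in item~1(c) shows directly that $P+n_1$ or $P+n_2$ is not in $\FraP$. Finally, if $P\in \Upsilon_1\cap \N^2$, then by hypothesis $P\notin \FraP$ and $P\notin \Upsilon'$, so the defining property $\Upsilon'=\{Q\in \Upsilon_1\cap \N^2\mid Q+n_1,Q+n_2\in \FraP\}$ forces $P+n_1\notin \FraP$ or $P+n_2\notin \FraP$.

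The symmetric argument, obtained by swapping the roles of $\tau_1\leftrightarrow\tau_2$, $\mathcal{B}_1\leftrightarrow \mathcal{B}_2$, $T_1\leftrightarrow T_2$, $\Upsilon_1\leftrightarrow \Upsilon_2$ and $\Upsilon'\leftrightarrow \Upsilon''$, handles the case where $P$ lies in the other stripe. Since these sub-cases exhaust each stripe by item~1(c), the conclusion of the lemma follows in every case.

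I do not expect a substantial obstacle in carrying this out: the genuine work has already been done in the lengthy geometric construction preceding the lemma, which established the behaviour of $\mathcal{B}_i$, $T_i$, $\Upsilon_i$, $\Upsilon'$ and $\Upsilon''$ and verified the three properties (a), (b), (c) for points of the stripes. The proof itself is essentially a bookkeeping exercise on top of those properties. Conceptually, the sets $\Upsilon'$ and $\Upsilon''$ were introduced precisely to quarantine those exceptional points of $\Upsilon_1\cup \Upsilon_2\setminus \FraP$ both of whose $n_i$-translates land back inside $\FraP$, and the hypothesis $P\notin \Upsilon'\cup \Upsilon''$ removes exactly that obstruction and makes the case analysis close cleanly.
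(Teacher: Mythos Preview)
Your proposal is correct and is exactly what the paper intends: its own proof reads simply ``Immediate from the above construction,'' and you have faithfully unpacked that immediacy by running through the stripe decomposition and the properties 1(a)--(c) established just before the lemma. There is nothing to add or correct.
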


\begin{proof}
Immediate from the above construction.
\end{proof}

If $F\cap \tau_1$ and $F\cap \tau_2$ are both segments, by \cite[Proposition 13]{ACBS} the set $\int (\FraC) \setminus \int(\FraP)$ is finite and by Lemma \ref{lemma_no_C-M} if it is nonempty $\FraP$ is not C-M.
If not, we have $\int (\FraC) = \int(\FraP)$,  $\FraC\cap \tau_{i}\neq \FraP\cap \tau_{i}$ for some $i=1,2$ and when this occurs  $(\FraC \setminus \FraP) \cap \tau_{i}$ is finite. Taking now $P$ the element of $(\FraC\setminus \FraP)\cap \tau_{i}$ of greatest distance to the origin we have that $P+n_1$ and $P+n_2$ are both in $\FraP$ and therefore $\FraP$ is not C-M.
Henceforth we will assume that $F\cap \tau_1$ and $F\cap \tau_2$ are not simultaneously segments.

\begin{theorem}\label{corolario_polygon1}
Let $\FraP$ be a simplicial affine convex polygonal semigroup such that $F\cap \tau_1$ and $F\cap \tau_2$ are not both segments. Then
\begin{enumerate}
\item \label{caso1} if $\int (\FraC) = \int(\FraP),$ the semigroup $\FraP$ is C-M if and only if $\FraP\cap \tau_i=\langle n_i \rangle$ for $i=1,2$,
\item \label{caso2} if $\int (\FraC) \neq \int(\FraP),$ the semigroup $\FraP$ is C-M if and only if
$\Upsilon\cup \Upsilon'\cup \Upsilon''\subset \FraP.$
\end{enumerate}
\end{theorem}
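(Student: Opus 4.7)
The plan is to treat the two cases separately, relying on the geometric dissection of $\FraC$ carried out in the paragraphs preceding Lemma \ref{lemaUpsilon} and on the characterization provided by Corollary \ref{C-M}.

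For item \ref{caso1}, where $\int(\FraC)=\int(\FraP)$, I would simply invoke Lemma \ref{lema_igual_interior_C-M}. A polygonal semigroup is by definition a convex body semigroup, the standing hypothesis $\FraP\neq\FraC$ of the section matches the hypothesis of that lemma, and the $n_i$'s are the same in both settings; so $\FraP$ is C-M if and only if $\FraP\cap\tau_i=\langle n_i\rangle$ for $i=1,2$, which is exactly the claim. Nothing new is required.

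For item \ref{caso2}, I would prove the two implications separately. Assume first that $\FraP$ is C-M, and suppose for contradiction that $\Upsilon\cup\Upsilon'\cup\Upsilon''\not\subset\FraP$. If there exists $P\in\Upsilon\setminus\FraP$, take such a $P$ with maximal distance $\d(P)$ to the origin; the choice is possible because, as noted just before Lemma \ref{lemaUpsilon}, the set $(\FraC\cap\Upsilon)\setminus\FraP$ is finite. Since $\Upsilon=(Q+L_{\Q_\geq}(F))\cap\N^2$ and both $n_1,n_2\in L_{\Q_\geq}(F)$, the translates $P+n_1$ and $P+n_2$ lie in $\Upsilon\cap\N^2$ and are strictly farther from the origin, so by maximality they must belong to $\FraP$. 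This contradicts Corollary \ref{C-M}. If instead there exists $P\in\Upsilon'\setminus\FraP$ (the case $P\in\Upsilon''\setminus\FraP$ is symmetric), then by the very definition of $\Upsilon'$ one has $P+n_1,P+n_2\in\FraP$, giving the same contradiction. Hence $\Upsilon\cup\Upsilon'\cup\Upsilon''\subset\FraP$.

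Conversely, assume $\Upsilon\cup\Upsilon'\cup\Upsilon''\subset\FraP$, and pick any $P\in\FraC\setminus\FraP$. The hypothesis forces $P\notin\Upsilon\cup\Upsilon'\cup\Upsilon''$, so Lemma \ref{lemaUpsilon} applies and yields $P+n_1\notin\FraP$ or $P+n_2\notin\FraP$. Corollary \ref{C-M} then gives that $\FraP$ is C-M. The only real subtlety in the whole argument is ensuring, in item \ref{caso2}, that the three sets $\Upsilon,\Upsilon',\Upsilon''$ genuinely exhaust every obstruction $P\in\FraC\setminus\FraP$ with $\{P+n_1,P+n_2\}\subset\FraP$; but this is precisely what Lemma \ref{lemaUpsilon} (and the preceding case analysis of the stripes between $\tau_i$ and $\nu_i$) guarantees, so once that lemma is at our disposal the proof is essentially a packaging argument.
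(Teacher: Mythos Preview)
Your proof is correct and follows essentially the same approach as the paper: item~\ref{caso1} is dispatched by Lemma~\ref{lema_igual_interior_C-M}, and item~\ref{caso2} combines Lemma~\ref{lemaUpsilon} with Corollary~\ref{C-M} in both directions, using the maximal-distance argument on $\Upsilon\setminus\FraP$ and the definition of $\Upsilon',\Upsilon''$ for the remaining obstructions. The only cosmetic difference is that you spell out the maximal-distance step explicitly, whereas the paper just refers back to the discussion preceding Lemma~\ref{lemaUpsilon}.
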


\begin{proof}

Without loss of generality we can suppose that $F\cap \tau_1$ is a single point.

Case \ref{caso1} is obtained by Lemma \ref{lema_igual_interior_C-M}.

Consider now that $\int (\FraC) \neq \int(\FraP)$ and the set $\Upsilon\cup \Upsilon'\cup \Upsilon''$ is  included in $\FraP$. For all $P$ belongs to $\FraC\setminus \FraP,$ $P\notin \Upsilon\cup \Upsilon'\cup \Upsilon''$ and then $P+n_1$ or $P+n_2$ is not in $\FraP$ (Lemma \ref{lemaUpsilon}). So $\FraP$ is C-M (Corollary \ref{C-M}).

Assume now that $\FraP$ is C-M, $\int (\FraC) \neq \int(\FraP)$ and $(\Upsilon\cup \Upsilon'\cup \Upsilon'')\setminus \FraP\neq \emptyset.$
If $\Upsilon \setminus \FraP$ is not empty, then $\FraP$ is not C-M. Analogously, if there exists
$P\in (\Upsilon'\cup \Upsilon'')\setminus \FraP,$ we have that $P+n_1,P+n_2\in \FraP,$ and $\FraP$ is not C-M. Thus, $\Upsilon\cup \Upsilon'\cup \Upsilon'' \subset \FraP.$
\end{proof}

The following result provides a method to obtain Cohen-Macaulay rings.
Note that if $F$ is a triangle, the sets $\Upsilon'$ and $\Upsilon''$ are included in $\FraP$,
thereby we prove now that $\Upsilon\subset \FraP.$

\begin{corollary}\label{triangulos_C-M}
Any affine polygonal semigroup generated by a triangle with rational vertices is C-M.
\end{corollary}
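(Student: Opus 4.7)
The plan is to apply Theorem \ref{corolario_polygon1}. By the standing assumption of the section, $F\cap\tau_1$ and $F\cap\tau_2$ are not both segments, so without loss of generality assume $F\cap\tau_1=\{P_1\}$ is a vertex of the triangle. When $\int(\FraC)=\int(\FraP)$ we are in Case \ref{caso1}: the condition $\FraP\cap\tau_i=\langle n_i\rangle$ is immediate for a triangle, since the only way to produce a point of $\FraP$ on $\tau_i$ is through an intersection $hF\cap\tau_i$, which is either the vertex $hP_i$ (a multiple of $n_i$) or reduces to handling the segment side through Case \ref{caso2}. So the substantive case is $\int(\FraC)\neq\int(\FraP)$, where I must verify $\Upsilon\cup\Upsilon'\cup\Upsilon''\subset \FraP$.

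Next I would justify the claimed inclusions $\Upsilon'\subset\FraP$ and $\Upsilon''\subset\FraP$. Since $F$ is a triangle there are only three edges in total, so near the extremal vertex $P_1$ the only edges that can obstruct coverage are the two adjacent edges $\overline{P_1P_2}$ and $\overline{P_1P_t}$; these are precisely the edges used to define the obstruction region $\Upsilon_1$ (via the triangles $T_1$ and the line $\nu_1$). A direct case analysis then shows that if $P\in\Upsilon_1\cap\N^2$ satisfies both $P+n_1\in\FraP$ and $P+n_2\in\FraP$, then $P$ is already contained in one of the scaled triangles $iF$: in the polygonal case the failure of this implication requires a third (non-adjacent) edge of $F$ cutting through the region, which is absent for a triangle. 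The same argument on the other extremal ray handles $\Upsilon''\subset\FraP$.

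The main step is to show $\Upsilon\subset\FraP$, where $\Upsilon=(Q+L_{\Q_\ge}(F))\cap\N^2$ and $Q=\nu_1\cap\nu_2$. For any point $P\in\Upsilon$, I would prove the existence of $i\in\N$ with $P\in iF$. The geometric idea is that $\nu_1$ (respectively $\nu_2$) is exactly the threshold beyond which consecutive scaled triangles $hF$ and $(h+1)F$ begin to overlap along their $P_1$-adjacent edges (by Lemma 11 of \cite{ACBS}, giving collinearity of the intersection points $h\overline{P_1P_t}\cap(h+1)\overline{P_1P_2}$). Past $\nu_1$ \emph{and} past $\nu_2$ simultaneously (the region $\Upsilon$), the ascending family $\{iF\}$ leaves no real gap because a triangle has no further edge direction that could obstruct the seam between $hF$ and $(h+1)F$; rationality of the vertices of $F$ then ensures lattice points are covered as well. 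Once $\Upsilon\cup\Upsilon'\cup\Upsilon''\subset\FraP$ is established, Theorem \ref{corolario_polygon1} yields that $\FraP$ is C-M.

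The main obstacle will be the geometric step of showing that the union $\bigcup_{i\ge i_0} iF$ has no gap, even at lattice points, inside $\Upsilon$. This needs to be argued carefully from the collinearity property and the triangular shape; for a general polygon this fails, which is why the finiteness statement $(\FraC\cap\Upsilon)\setminus\FraP$ finite is the best available, but the absence of a third obstructing edge direction in a triangle should be exactly what upgrades ``finitely many gaps'' to ``no gaps.''
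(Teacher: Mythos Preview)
Your outline follows the same scaffold as the paper's proof---reduce to Theorem~\ref{corolario_polygon1}, dispose of $\Upsilon'$ and $\Upsilon''$ by the absence of a third edge, and then argue $\Upsilon\subset\FraP$---but the two places where real work is needed are only gestured at.

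The main gap is in the step $\Upsilon\subset\FraP$. Saying that ``the ascending family $\{iF\}$ leaves no real gap because a triangle has no further edge direction'' is the right heuristic but not a proof: what must actually be shown is that the point $Q=\nu_1\cap\nu_2$ lies in some $jF$, and this is a genuinely geometric fact, not a consequence of rationality alone. The paper's device for this is to cut the triangle $T=\{A,B,C\}$ along the ray through $O$ and the middle vertex $B$, obtaining two subtriangles $T',T''$ each of which has its third vertex collinear with $O$ and the opposite extremal vertex. In that reduced configuration the line $\nu_1$ is the line through the apices of the gap-triangles $iT'''$, and its intersection with $\tau_2$ visibly lands inside a dilate $jT$, whence $\Upsilon\subset\FraP$. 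Without this reduction (or an equivalent explicit computation) your argument does not pin down $Q$, and the conclusion does not follow.

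Your Case~\ref{caso1} treatment also needs repair. When $F\cap\tau_2$ is a segment, the assertion $\FraP\cap\tau_2=\langle n_2\rangle$ is not immediate and does not ``reduce to Case~\ref{caso2}'': we are, by hypothesis, in the regime $\int(\FraC)=\int(\FraP)$, and Case~\ref{caso2} is simply unavailable. The paper handles this by contradiction, using the structure built in the first part of the proof: if $\FraT\cap\tau_2$ required a second generator, the primitive lattice point $a$ on $\tau_2$ would lie in $\overline{OQ}\setminus\FraP$, forcing $a+n_1\notin\FraP$ and contradicting $\int(\FraC)=\int(\FraT)$. Some argument of this type is needed here; ``the vertex $hP_i$ is a multiple of $n_i$'' only covers the vertex side.
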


\begin{proof}

Consider the triangle $T$ with vertices $\{A,C,B\}\subset \Q_{\geq}^2$ arranged in counterclockwise,  such that $T\cap \tau_1=A$, $L_{\Q_\geq}(T)=L_{\Q_\geq}(\{A,C\})$ and let $\FraT$ the affine semigroup generated by $T$.

If $C$ is not in the ray defined by the origin and $B$, consider
$D$ the point obtained as the intersection of the segment $\overline{AC}$ and the ray defined by the origin and $B$, define $T'$ the triangle with vertices $\{A,B,D\}$ and $T''$ the triangle with vertices $\{B,C,D\}$. Note that $\Upsilon$ is the union of the corresponding $\Upsilon$ sets of the semigroups generated by $T'$ and $T''$.
This situation if illustrated in Figure \ref{triangulo_C-M_particion}.
\begin{figure}[h]
    \begin{center}
\begin{tabular}{|c|}\hline
\includegraphics[scale=0.4]{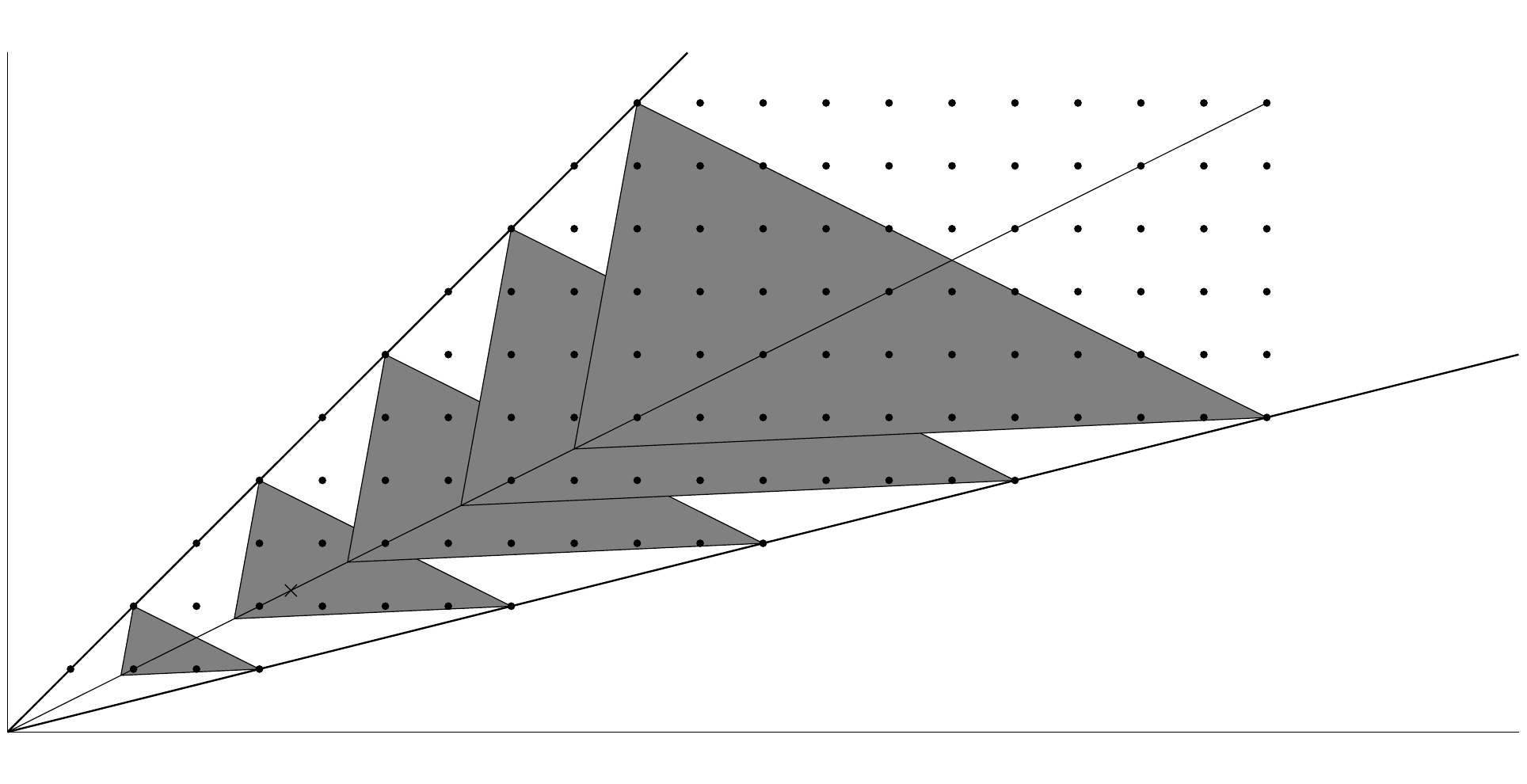} \\ \hline
\end{tabular}
\caption{Initial partition.}\label{triangulo_C-M_particion}
\end{center}
\end{figure}

We can simplify our situation assuming that  $O$, $B$ and $C$ are aligned and $\d(B)<\d(C)$. Let now $j$ be the smallest nonnegative integer fulfilling that $jT\cap (j+1)T\neq \emptyset$ and denote by $T'''$ the triangle $\{jA,(j+1)A,\overline{(jA)(jC)}\cap \overline{((j+1)A)((j+1)B)}-j\overrightarrow{OA}.$
So $\int (\FraT)= \int(\FraC)\setminus \{\cup _{i\ge 0} \stackrel{\circ}{iT'''} \},$ and the set $\Upsilon$ is determined by the straight line containing $OC$ and the straight line defined by the vertices of the triangles $iT'''$ which are not in the rays of $\FraC$, furthermore the intersection of this two straight lines is an element of a triangle $jT$ and this lead us to say that $\Upsilon \subset \FraP$. Figure \ref{triangulo_C-M_mitad} illustrates this setting.
\begin{figure}[h]
    \begin{center}
\begin{tabular}{|c|}\hline
\includegraphics[scale=0.35]{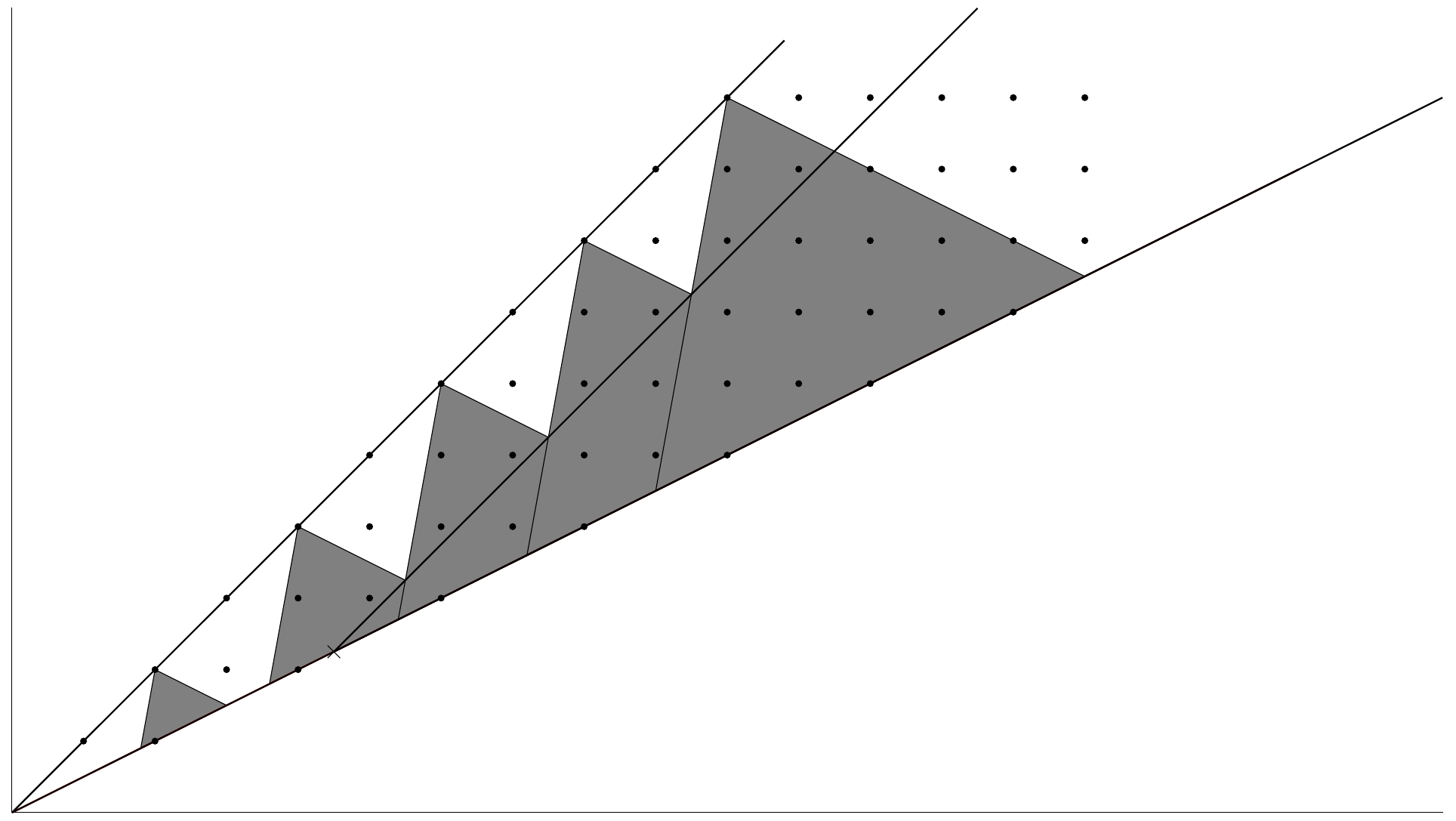} \\ \hline
\end{tabular}
\caption{Set $\Upsilon$.}\label{triangulo_C-M_mitad}
\end{center}
\end{figure}
Thus if $\int (\FraC)\neq \int (\FraT)$ by Theorem \ref{corolario_polygon1} case \ref{caso2} the semigroup $\FraT$ is C-M.

If $\int (\FraC)= \int (\FraT)$ and $\FraT=\FraC$, then by Corollary \ref{C-M} the semigroup $\FraT$ is C-M.
Assume now that $\int (\FraC)=\int (\FraT)$, $\FraT\neq\FraC$  and that $\FraT\cap \tau_2$ is generated by at least two elements.
In particular, if $a\in \FraC$ with $\FraC\cap \tau_2=\langle a\rangle$, then $a\in \overline{OQ}\setminus\FraP$ and therefore $a+n_1\notin \FraP$ which contradicts the fact that $\int (\FraC)=\int (\FraT)$.
So, in any case $\FraT\cap \tau_1$ and $\FraT\cap \tau_2$ are both generated by a unique element and by Theorem \ref{corolario_polygon1} case \ref{caso1} the semigroup $\FraT$ is C-M.
\end{proof}

\section{Gorenstein affine convex polygonal semigroups}\label{sec5}

Once C-M polygonal semigroups are characterized, we characterize Gorenstein polygonal semigroups. Since this characterization depends on the set $\ap(n_1)\cap \ap(n_2)$ (Theorem \ref{Gorenstein_rosales}), we start by describing this set.
 Define $\BfH ',$ $\BfH ''$ and $\BfH '''$ the finite sets $\Upsilon_1 \cap \{P\in \N^2| P-n_1,P-n_2\notin \FraP\},$ $\Upsilon_2 \cap \{P\in \N^2| P-n_1,P-n_2\notin \FraP\}$ and ${\rm ConvexHull}(\{Q,Q+n_1,Q+n_2,Q+n_1+n_2\}) \cap \{P\in \N^2| P-n_1,P-n_2\notin \FraP\}$, respectively.

\begin{lemma}\label{lema_apery_poligonal}
Let  $\FraP$ be a C-M simplicial affine polygonal semigroup then $\ap(n_1)\cap \ap(n_2)=\FraP \cap (\BfH'\cup \BfH''\cup \BfH''').$
\end{lemma}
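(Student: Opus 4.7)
The plan is to split the claim into the two set inclusions and route the nontrivial one through the partition of $\FraC$ that was already set up before Lemma \ref{lemaUpsilon}. The easy containment $\supseteq$ is immediate from the definitions: any $P\in\BfH'\cup\BfH''\cup\BfH'''$ that also lies in $\FraP$ automatically satisfies $P-n_1\notin\FraP$ and $P-n_2\notin\FraP$ (that condition is built into each of the three sets), hence $P\in\ap(n_1)\cap\ap(n_2)$.

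For the reverse containment $\subseteq$, I would take an arbitrary $P\in\ap(n_1)\cap\ap(n_2)$ and locate it in the partition of $\FraC$ coming from the two stripes (bounded by $\tau_1,\nu_1$ and by $\tau_2,\nu_2$) together with $\Upsilon=(Q+L_{\Q_\ge}(F))\cap\N^2$. Once $P$ is placed in one of these three regions, I would show it actually sits in $\Upsilon_1$, $\Upsilon_2$ or $\mathrm{ConvexHull}(\{Q,Q+n_1,Q+n_2,Q+n_1+n_2\})$ respectively, and the $\ap$-condition then delivers membership in the corresponding $\BfH',\BfH'',\BfH'''$.

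Concretely, suppose $F\cap\tau_1=\{P_1\}$ and $P$ lies in the $\tau_1$-stripe. The earlier construction decomposes that stripe into $\mathcal{B}_1$, $\Upsilon_1$, and the translates $(\stackrel{\circ}{T_1}\cup(\overline{OP_1}\setminus\{O,P_1\}))+\mu P_1$. The third option is disjoint from $\FraP$, so it is excluded by $P\in\FraP$. If $P\in\mathcal{B}_1\cap\FraP$, property (b) of the stripe gives $P-n_1\in\FraP$, contradicting $P\in\ap(n_1)$. Thus $P\in\Upsilon_1$, so $P\in\BfH'$. The $\tau_2$-stripe is handled identically with $\Upsilon_2$ and $\BfH''$, and the segment cases for $F\cap\tau_i$ reduce trivially since then $\Upsilon_i=\{O\}$ and the corresponding stripe is empty. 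For the third region, if $P\in\Upsilon\cap\N^2$, write $P=Q+\mu_1 n_1+\mu_2 n_2$ with $\mu_1,\mu_2\in\Q_\ge$. If $\mu_1\ge 1$ then $P-n_1=Q+(\mu_1-1)n_1+\mu_2 n_2\in\Upsilon\cap\N^2$; since $\FraP$ is C-M, Theorem \ref{corolario_polygon1} gives $\Upsilon\subset\FraP$, so $P-n_1\in\FraP$, contradiction. Hence $\mu_1<1$, and symmetrically $\mu_2<1$, which places $P$ in $\mathrm{ConvexHull}(\{Q,Q+n_1,Q+n_2,Q+n_1+n_2\})$, i.e.\ $P\in\BfH'''$.

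The main obstacle I expect is the bookkeeping for the degenerate configurations: when $\int(\FraC)=\int(\FraP)$ (case \ref{caso1} of Theorem \ref{corolario_polygon1}) the inclusion $\Upsilon\subset\FraP$ needs to be recovered from the $\int(\FraC)=\int(\FraP)$ hypothesis rather than from case \ref{caso2}, and when one of $F\cap\tau_i$ is a segment one has to verify the stripe arguments collapse correctly (with $\nu_i=\tau_i$ and $\Upsilon_i=\{O\}$) without creating extra Apéry elements. Once these boundary situations are dispatched, the trichotomy above yields the stated equality.
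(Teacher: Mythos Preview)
Your argument is essentially the same as the paper's: both handle the easy inclusion by definition and route the reverse inclusion through the decomposition of $\FraC$ into the two stripes and $\Upsilon$, using property (b) of $\mathcal{B}_i$ to exclude those regions and forcing $P$ into $\Upsilon_1$, $\Upsilon_2$, or the parallelogram at $Q$. The paper's proof is in fact terser than yours---it simply asserts that a point of $\FraP\setminus(\BfH'\cup\BfH''\cup\BfH''')$ lies in $\mathcal{B}_1\cup\mathcal{B}_2\cup(\Upsilon\setminus\BfH''')$ and concludes---so your explicit use of Theorem~\ref{corolario_polygon1} to secure $\Upsilon\subset\FraP$, and your flagging of the $\int(\FraC)=\int(\FraP)$ boundary case, actually supply detail that the paper leaves implicit rather than diverging from it.
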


\begin{proof}
It is straightforward that $\FraP \cap (\BfH'\cup \BfH''\cup \BfH''')\subset \ap(n_1)\cap \ap(n_2).$

Let $P\in\FraP\setminus (\BfH'\cup \BfH''\cup \BfH''')$, we see that $P\not\in \ap(n_1)\cap \ap(n_2)$.
Since $P\in \mathcal{B}_1\cup \mathcal{B}_2\cup (\Upsilon\setminus\BfH''')$, we obtain $P-n_1$ or $P-n_2$ belongs to $\FraP$ and therefore
$P\not\in \ap(n_1)\cap \ap(n_2)$.
Thus $\ap(n_1)\cap \ap(n_2)\subseteq \FraP\cap (\BfH'\cup \BfH''\cup \BfH''').$

\end{proof}

The following result characterized Gorenstein simplicial affine convex polygonal semigroups.

\begin{theorem}\label{polygonal_Gorenstein}
Under the assumption that $\FraP$ is C-M, the semigroup $\FraP$ is Gorenstein if and only if there exists a unique maximal element in the finite set $\FraP\cap (\BfH'\cup \BfH''\cup \BfH''').$
\end{theorem}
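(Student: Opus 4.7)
The proof should be an immediate application of the two main tools already assembled: the Rosales-type characterization in Theorem \ref{Gorenstein_rosales} and the description of the Apéry intersection in Lemma \ref{lema_apery_poligonal}. In fact this is completely parallel to Corollary \ref{corollary_igual_interior_Gorenstein}, but with the more intricate set $\BfH'\cup\BfH''\cup\BfH'''$ replacing $\BfH$; the extra work has already been absorbed into the lemma.

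The plan is as follows. First, invoke Theorem \ref{Gorenstein_rosales}: since $\FraP$ is simplicial of rank two and, by hypothesis, C-M, $\FraP$ is Gorenstein if and only if the set $\ap(n_1)\cap\ap(n_2)$ has a unique maximal element under the order $\le_{\FraP}$ defined by $a\le_{\FraP} b\Leftrightarrow b-a\in\FraP$. Second, apply Lemma \ref{lema_apery_poligonal}, which uses the C-M hypothesis to give the explicit identification
\[
\ap(n_1)\cap\ap(n_2)=\FraP\cap(\BfH'\cup\BfH''\cup\BfH''').
\]
Combining these two statements yields the equivalence claimed in the theorem. The finiteness of the set on the right-hand side follows from the construction of $\BfH'$, $\BfH''$, $\BfH'''$ as intersections of bounded convex regions with $\N^2$, which was already noted when those sets were introduced.

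The only thing that might require a brief comment is the meaning of ``maximal element'': one should point out that the order used in Theorem \ref{Gorenstein_rosales} is the partial order induced by $\FraP$ itself, and hence transfers without modification across the identification provided by Lemma \ref{lema_apery_poligonal}, since both sides refer to the same subset of $\FraP$. Apart from this observation, there is no real obstacle; the technical heavy lifting is contained in the description of $\BfH'\cup\BfH''\cup\BfH'''$ and in Lemma \ref{lema_apery_poligonal}, not in the final corollary-style deduction.
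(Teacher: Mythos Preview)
Your proposal is correct and matches the paper's own proof essentially verbatim: the paper simply writes ``By Lemma \ref{lema_apery_poligonal} and Theorem \ref{Gorenstein_rosales},'' which is exactly the two-step deduction you describe. Your additional remarks on the partial order and finiteness are accurate clarifications but add nothing the paper felt necessary to spell out.
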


\begin{proof}
By Lemma \ref{lema_apery_poligonal} and Theorem \ref{Gorenstein_rosales}.
\end{proof}

Using the explicit description of the set $\ap(n_1)\cap \ap(n_2)$  given in Lemma \ref{lema_apery_poligonal}, it can be obtained an algorithm to check if a C-M polygonal semigroup is Gorenstein:
first we must compute $n_1$ and $n_2$ which can be done directly from the vertices of $F$; the elements $V_1$, $V_2$ and $Q$ are also required and can be computed using the results of Section \ref{sec4}; finally we must compute $\Upsilon_1$, $\Upsilon_2$ and $\Upsilon$ and from these sets $\BfH'$, $\BfH''$ and $\BfH'''$.
Note that the required elements can be computed by using elementary geometry and algebra. Besides, to determine the sets $\BfH'$, $\BfH''$ and $\BfH'''$ it is only required an algorithm to check the belonging of an element to the semigroup $\FraP$. That algorithm was introduced in Section \ref{sec2}, it uses
 the equations of the sides of $F$ and   its time
complexity is linear on the number of vertices when  they are arranged in counterclockwise.

From Corollary \ref{triangulos_C-M} and Theorem \ref{polygonal_Gorenstein} it is possible to construct nonnormal Gorenstein semigroup rings.
For instance we can generate a family of Gorenstein rings by using the triangles with vertex set $$\{(4,0),(4+2k,0),(4+k,k)\},$$
with  $k$ an integer greater than or equal to $2$. In Table \ref{interseccion_apery} we determine explicitly the intersection  $\ap(n_1)\cap \ap(n_2).$
\begin{table}[h]
\begin{center}
\begin{tabular}{|c|c|c|}
\hline
$\ap(n_1)\cap \ap(n_2)\cap \{y=0\}$ & = & $\{
(0,0),(5,0),(6,0), (7,0)\}$ \\ \hline
$\ap(n_1)\cap \ap(n_2)\cap \{{y=1\}}$ & = & $\{(5,1) , (6,1) , (7,1) , (8,1)\}$ \\ \hline
\vdots & \vdots & \vdots \\ \hline
\multirow{2}{*}{$\ap(n_1)\cap \ap(n_2)\cap \{y=k-2\}$} & = & $\{(2+k,k-2) , (3+k,k-2) ,$\\
& & $(4+k,k-2) , (5+k,k-2) \}$\\ \hline
\multirow{2}{*}{$\ap(n_1)\cap \ap(n_2)\cap \{y=k-1\}$} & = & $\{(3+k,k-1) , (4+k,k-1) ,$\\
 & & $(5+k,k-1) , (10+k,k-1) \}$\\ \hline
$\ap(n_1)\cap \ap(n_2)\cap \{y\ge k\}$ & = & $\emptyset$\\ \hline
\end{tabular}
\caption{$\ap(n_1)\cap \ap(n_2)$.}\label{interseccion_apery}
\end{center}
\end{table}
In the above table we can see that the elements of  $\ap(n_1)\cap \ap(n_2)$
are of the form $(4+k-i+j,k-i)$ with $j=0,\ldots ,3,$ and  $i=2,\ldots ,k-1.$
Thus since
 $(10+k,k-1)=(5,0)+(5+k,k-1)=(6,0)+(4+k,k-1)=(7,0)+(3+k,k-1)$ and $(10+k,k-1)=(4+k-i+j,k-i)+\big(4+k-(k-i+1)+(3-j),i-1\big)$ for all  $i=2,\ldots ,k-1$ and  $j=0,\ldots,3$
we have that  $(10+k,k-1)$ is the unique maximal element of  $\ap(n_1)\cap \ap(n_2)$.
Therefore all these polygonal semigroups are  Gorenstein. In Figure \ref{gorenstein1} it is represented the polygonal semigroup obtained when $k=3$.
\begin{figure}[h]
    \begin{center}
\begin{tabular}{|c|}\hline
\includegraphics[scale=0.4]{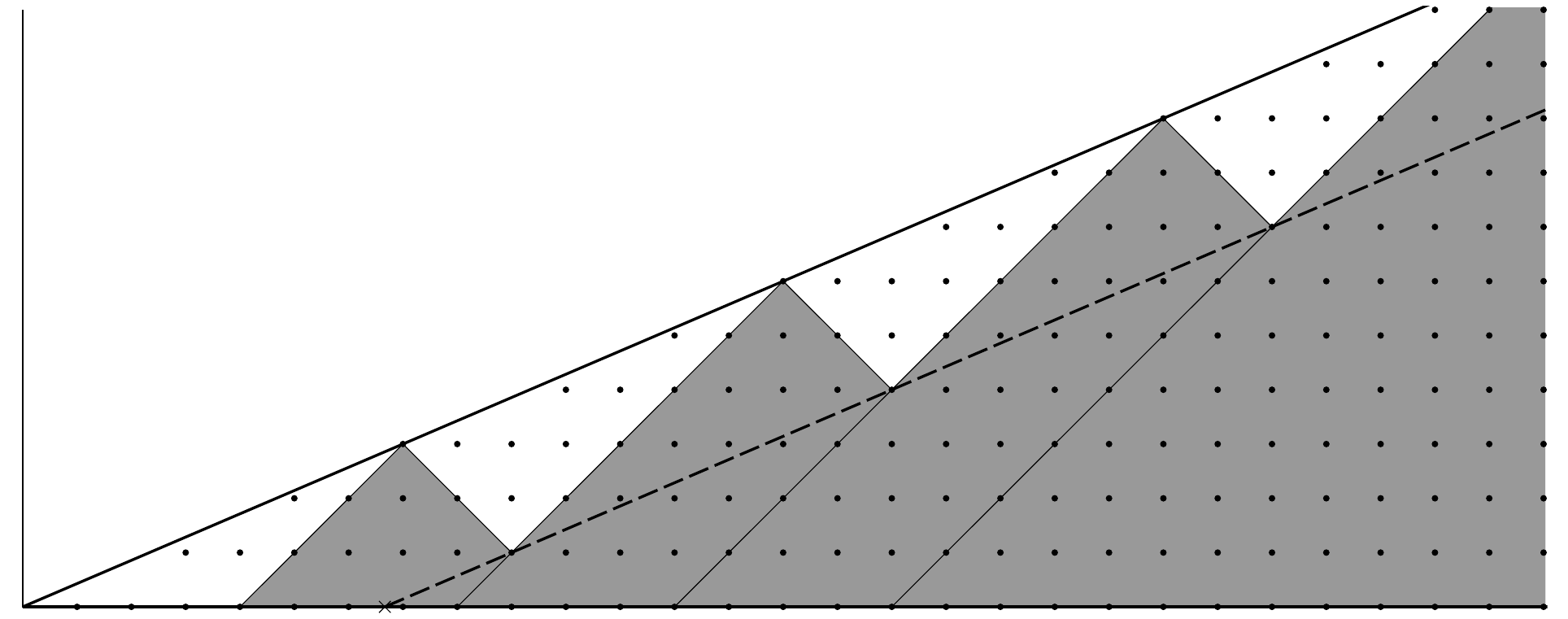} \\
Polygon$=\{(4, 0), (7, 3), (10, 0)\}$ \\
$\ap(n_1)\cap \ap(n_2)=\{O,(5,0),(6,0),(7,0),(5,1),(6,1),$\\
$(7,1),(8,1),(6,2),(7,2),(8,2),(13,2)\}$ \\ \hline
\end{tabular}
\caption{Example of Gorenstein polygonal semigroup.}\label{gorenstein1}
\end{center}
\end{figure}

\section{Some conclusions}\label{sec6}

Determine whether a circle or polygonal semigroup is C-M and/or Gorenstein is very simple because the elements needed to check the characterizations given in this paper (Proposition \ref{circulos_C-M}, Corollary \ref{circulos_gorenstein} and Theorems \ref{corolario_polygon1} and \ref{polygonal_Gorenstein}) can be obtained using basic geometry, all of this is mainly due to the fact that  the belonging of one element in one of these semigroups can be easily performed  from the polygon or circle that defines the semigroup. Thus convex body semigroups and all the theory developed to check their properties allow to generate instances of Cohen-Macaulay and Gorenstein semigroup rings, in particular using triangles many examples can be easily obtained.
Moreover, we think that convex body semigroups are a good first place to search for semigroups verifying certain properties not only the two properties characterized in this work.

\end{document}